\newcommand\bldxi{\boldsymbol\omega}
\newcommand\bldx{\boldsymbol t}
\newcommand\bldt{\boldsymbol x}
\newcommand\xx{t}
\newcommand\yy{y}
\newcommand\tee{x}
\newcommand\xxi{\omega}
\newcommand\PQRS{\alpha_2}
\newcommand\SRQP{\beta_2}
\newcommand\UVWX{\alpha_1}
\newcommand\XWVU{\beta_1}
\newcommand\MNOP{\gamma_2}
\newcommand\PONM{\gamma_1}
\newcommand\ABCD{\delta_2}
\newcommand\DCBA{\delta_1}
\newtheorem{definition}{Definition}[section]
\newtheorem{theorem}{Theorem}[section]
\newtheorem{lemma}[theorem]{Lemma}
\newtheorem{remark}{Remark}
\title{Uncertainty principles associated with the short time quaternion coupled fractional Fourier transform}
\author{Bivek Gupta$^a$, Amit K. Verma$^b$, Ravi P. Agarwal$^c$\thanks{$^a$bivekgupta040792@gmail.com,$^b$akverma@iitp.ac.in, Corresponding Author: $^c$Ravi.Agarwal@tamuk.edu}\\\small{\it{$^{a,b}$ Department of Mathematics,}} \\\small{\it{Indian Institute of Technology Patna,}}\\\small{\it{ Bihta, Patna 801103, (BR) India.}}\\\small{\it{$^c$Department of Mathematics, Texas A\&M University-Kingsville, 700 University Blvd., }}\\\small{\it{MSC 172, Kingsville, Texas  78363-8202.}}}
\date{\today}
\begin{document}
\maketitle
\begin{abstract}
In this paper, we extend the coupled fractional Fourier transform of a complex valued functions to that of the quaternion valued functions on $\mathbb{R}^4$ and call it the quaternion coupled fractional Fourier transform (QCFrFT). We obtain the sharp Hausdorff-Young inequality for QCFrFT and obtain the associated R\`enyi uncertainty principle. We also define the short time quaternion coupled fractional Fourier transform (STQCFrFT) and explore  its important properties followed by the Lieb's and entropy uncertainty principles.
\end{abstract}
{\textit{Keywords}:} Quaternion Coupled Fractional Fourier Transform; Short Time Quaternion Coupled Fractional Fourier Transform; Lieb's Uncertainty Principle\\
{\textit{AMS Subject Classification 2020}:} 
11R52, 42B10, 42A05

\section{Introduction}

Based on the knowledge that the Hemite functions are the eigen functions of the Fourier transform (FT) with eigen values $e^{in\frac{\pi}{2}},$ Namias \cite{namias1980fractional} in 1980 introduced the fractional Fourier transform (FrFT) with angle $\theta$ as an integral transform  whose eigen function are the Hermite functions but with eigen values $e^{in\theta},$ and which reduces  to the FT when $\theta=\frac{\pi}{2}.$ This later was refined by McBride and Kerr \cite{kerr1988namias}, \cite{kerr1988distributional}. The extension of the FrFT to the higher dimension can be seen in \cite{verma2021note}, where the kernel of the transform has been obtained by taking the tensor product of $n$ copies of the kernel of one-dimensional transform. Following the ideas of the Namias's  and using the fact the the Hermite function of two complex variables are eigenfunction of the two dimensional FT, Zayed in  \cite{zayed2018two},\cite{zayed2019new} introduced a new definition of  two dimensional FrFT $\mathcal{F}^{\alpha,\beta},$ which is not a tensor product of two copies of one-dimensional transform and is given as 
\begin{align}\label{P7eqn1}
(\mathcal{F}^{\alpha,\beta}f)(\boldsymbol u)=\tilde{d}(\gamma)\int_{\mathbb{R}^2}f(\bldt)e^{-i\{\tilde{a}(\gamma)(|\bldt|^2+|\boldsymbol u|^2)-\bldt\cdot M\boldsymbol u\}}d\bldt,
\end{align}
where $\alpha,\beta\in \mathbb{R}$ and are such that $\alpha+\beta\notin 2\pi\mathbb{Z}$ and $\gamma=\frac{\alpha+\beta}{2},~\delta=\frac{\alpha-\beta}{2}, ~\tilde{a}(\gamma)=\frac{\cos\gamma}{2},~\tilde{b}(\gamma,\delta)=\frac{\cos\delta}{\sin\gamma},~\tilde{c}(\gamma,\delta)=\frac{\sin\delta}{\sin\gamma},~\tilde{d}(\gamma)=\frac{ie^{-i\gamma}}{2\pi\sin\gamma},$  
$M=
\begin{pmatrix}
\tilde{b}(\gamma,\delta) & \tilde{c}(\gamma,\delta)\\
-\tilde{c}(\gamma,\delta) & \tilde{b}(\gamma,\delta)
\end{pmatrix}.
$
Furthermore, the transform depends on the angles $\alpha$ and $\beta$ that are coupled so that the transform parameters are $\gamma=\frac{\alpha+\beta}{2}$ and $\delta=\frac{\alpha-\beta}{2}.$ Kamalakkannan et al. \cite{kamalakkannan2022extension} proved the Parseval's identity, inversion theorem and that the class $\{\mathcal{F}^{\alpha,\beta}:\alpha,\beta\in\mathbb{R},~\alpha+\beta\notin2\pi\mathbb{Z}\}$ is a family of unitary operators  on $L^2(\mathbb{R}^2)$ which satisfies the additive property $\mathcal{F}^{\alpha',\beta'}(\mathcal{F}^{\alpha,\beta}f)=\mathcal{F}^{\alpha+\alpha',\beta+\beta'}f$ for $\alpha+\beta,~\alpha'+\beta',~\alpha+\beta+\alpha'+\beta'\notin 2\pi\mathbb{Z}.$ Kamalakkannan et al. \cite{kamalakkannan2020multidimensional} extended the fractional Fourier transform (FrFT)  to the $n-$dimensional FrFT, which is more general than that in \cite{verma2021note}, and introduced a corresponding convolution structure followed by the convolution theorem. Recently, Shah et al. \cite{shah2022class} obtained the Heisenberg uncertainty principle (UP) followed by the local and logarithmic UPs. They also established some concentration based UP including Amrein-Berthier-Bebedicks, Donoho-Stark's UPs etc.

Even though CFrFT generalizes the FrFT to two dimension, but because of the presence of global kernel it fails in giving the local information of non-transient signals. Thus, Kamalakkannan et al. (\cite{kamalakkannan2021short}) developed a short time coupled fractional Fourier transform (STCFrFT) and obtained the associated Parseval's and inversion formula followed by some associated UPs.
 
The quaternion Fourier transform (QFT), introduced by Ell \cite{ell1993quaternion}, is useful in the analysis of $\mathbb{H}-$valued, i.e., quaternion valued functions. Based on the non-commutativity of the product of quaternion, the QFT can be classified into various types, namely, left-sided, right-sided and two-sided \cite{bahri2014continuous},\cite{bahri2008uncertainty},\cite{ell1993quaternion}. For the right sided QFT, Cheng et. al \cite{cheng2019plancherel} discussed the Plancherel theorem and also obtained its relation  with the other two QFT. Lian \cite{lian2018uncertainty}, proved the Pitt's inequality, logarithmic UP (also see \cite{chen2015pitt}), entropy UP for the two-sided QFT with optimal constants. Also, in \cite{lian2020sharp}, author obtained the sharp Hausdorff-Young (H-Y) inequality along with the  Hirschman's entropy UP for the two-sided QFT using the standard differential approach. Recently, QFT has been extended to the quaternion FrFT (QFrFT) and also the quaternion quadratic phase Fourier transform (QQPFT)\cite{bahri2023some},\cite{gupta2022short}. Replacing the kernels in the definition of the two sided QFT (\cite{lian2021quaternion}) of the function defined on $\mathbb{R}^2$, with that of the kernels of the FrFT (\cite{namias1980fractional},\cite{almeida1994fractional},\cite{verma2021note}) results in the two-sided QFrFT. Similar to this, both other variant of QFrFT can be found in \cite{wei2013different}.

The generalization of classical windowed Fourier transform to $\mathbb{H}-$valued functions on $\mathbb{R}^2$ can be found in \cite{bahri2010windowed}. Authors obtained its several important properties  using the properties of the right sided QFT \cite{bahri2008uncertainty}. They also obtained the Heisenberg UP for the quaternion windowed Fourier transform (QWFT) using the same technique as Wilczok \cite{wilczok2000new} used. In \cite{bahri2020uncertainty}, authors also obtained the Pitt's inequality and Lieb's inequality  for the right sided QWFT introduced in \cite{bahri2010windowed}. Including, the orthogonality property for the two sided QWFT, authors in \cite{kamel2019uncertainty},\cite{brahim2020uncertainty} studied several UPs including Beckner's UP in terms of entropy and Lieb's UP. Substituting the Fourier kernel in the left sided, right sided or two sided QWFT by the fractional Fourier kernels, results in the corresponding QWFrFT. 

As noted above, some important properties as well as the UPs of the CFrFT and the STCFrFT have been investigated for the complex valued function. It is natural to extend these transforms to quaternion setting. As far as we know these transforms have not been introduced  for the $\mathbb{H}-$valued functions. This paper deals with the two-sided QCFrFT. We derive the sharp H-Y inequality for QCFrFT followed by the R\`enyi entropy UP for QCFrFT. We also introduce the STQCFrFT and along with its basic properties like, linearity, translation etc., we also obtain its inner product relation and reconstruction formula. The Lieb's and entropy UPs for the proposed STQCFrFT is obtained with the aid of QCFrFT's sharp H-Y inequality.

The paper is organized as follows:
In section 2, we recall some basics associated with the quaternion algebra. We define the two sided QCFrFT and establish its several important properties in section 3. In section 4, we defined the two sided STQCFrFT and explore its properties along with the Lieb's and entropy UPs followed by an example of the STQCFrFT. Lastly, in section 5, we conclude this paper.

\section{Preliminaries}
The quaternion algebra $\mathbb{H}=\{s=s_0+is_1+js_2+ks_3 :s_0,s_1,s_2,s_3\in\mathbb{R}\},$ where $i,j$ and $k$ are the imaginary units satisfying the Hamilton's multiplication rule $i^2=j^2=k^2=-1,~ij=-ji=k,~jk=-kj=i,~ki=-ik=j$. For a quaternion $s=s_0+is_1+js_2+ks_3,$ we define the following terms
\begin{itemize}
\item $Sc(s)=s_0,$ called the scalar part of $s$ which satisfies the cyclic multiplicative symmetry (\cite{hitzer2007quaternion}), i.e., $Sc(qrs)=Sc(sqr)=Sc(rsq),~\forall~q,r,s\in\mathbb{H}.$
\item the quaternion conjugate $\bar{s}=s_0-is_1-js_2-ks_3,$ which satisfies $\overline{rs}=\bar{r}\bar{s},~\overline{r+s}=\bar{s}+\bar{s},~\bar{\bar{s}}=s,~\forall~r,s\in\mathbb{H}.$
\item the modulus $|s|=\sqrt{s\bar{s}}=\left(\sum_{l=0}^3s_l^2\right)^{\frac{1}{2}},$ which satisfies $|rs|=|r||s|,~\forall~r,s\in\mathbb{H}.$
\end{itemize}
A $\mathbb{H}-$valued function $g$ defined on $\mathbb{R}^n$  is of the form $g(\bldx)=g_0(\bldx)+ig_1(\bldx)+jg_2(\bldx)+kg_3(\bldx),~\bldx\in\mathbb{R}^n,$
where $g_0,g_1,g_2$ and $g_3$ are real valued. The $L^q-$norm, $1\leq q<\infty,$ of $g$ is defined by
\begin{align}\label{P7eqn6}
\|g\|_{L^q_\mathbb{H}(\mathbb{R}^n)}
&=\left(\int_{\mathbb{R}^n}|g(\bldx)|^qd\bldx\right)^\frac{1}{q}
\end{align}
and the collection of all measurable function with $\mathbb{H}-$valued having finite $L^q-$norm is a Banach space denoted by $L^q_\mathbb{H}(\mathbb{R}^n).$ $L^\infty_\mathbb{H}(\mathbb{R}^n)$ is the set of all essentially bounded measurable $\mathbb{H}-$valued functions with norm 
\begin{align}\label{P7eqn7}
\|g\|_{L^\infty_\mathbb{H}(\mathbb{R}^n)}=\mbox{ess~sup}_{\bldx\in\mathbb{R}^n}|g(\bldx)|.
\end{align}
Moreover, the $\mathbb{H}-$valued inner product 
\begin{align}\label{P7eqn8}
(g,h)=\int_{\mathbb{R}^n}g(\bldx)\overline{h(\bldx)}d\bldx,
\end{align}
with symmetric real scalar part $\langle g,h\rangle=Sc\left(\int_{\mathbb{R}^n}g(\bldx)\overline{h(\bldx)}d\bldx\right)$
turns $L^2_\mathbb{H}(\mathbb{R}^n)$ to a Hilbert space, where the norm in \eqref{P7eqn6} can be expressed as 
\begin{align}
\|g\|_{L^2_\mathbb{H}(\mathbb{R}^n)}=\sqrt{\langle g,g \rangle}=\sqrt{(g,g)}=\left(\int_{\mathbb{R}^n}|g(\bldx)|^2d\bldx\right)^\frac{1}{2}.
\end{align}

\section{Quaternion Coupled Fractional Fourier transform (QCFrFT)}
In this section we give a definition of two sided QCFrFT and study its important properties.
\begin{definition}\label{P7Defn3.1}
Let $\boldsymbol\alpha=(\UVWX,\PQRS),\boldsymbol\beta=(\XWVU,\SRQP) \in\mathbb{R}^2$ such that $\UVWX+\XWVU,~\PQRS+\SRQP\notin 2\pi\mathbb{Z} $. The QCFrFT of $f(\bldt)\in L^2_{\mathbb{H}}(\mathbb{R}^4),~\bldt=(\bldt_1,\bldt_2)\in\mathbb{R}^2\times\mathbb{R}^2,$ is defined by 
\begin{align}\label{P7eqn11}
(\mathcal{F}^{\boldsymbol\alpha,\boldsymbol\beta}_{\mathbb{H}} f)(\bldxi)=\int_{\mathbb{R}^4}\mathcal{K}^i_{\UVWX,\XWVU}(\bldt_1,\bldxi_1)f(\bldt)\mathcal{K}^j_{\PQRS,\SRQP}(\bldt_2,\bldxi_2)d\bldt,~\bldxi=(\bldxi_1,\bldxi_2)\in\mathbb{R}^2\times\mathbb{R}^2
\end{align}
where 
\begin{align}\label{P7eqn12}
\mathcal{K}^i_{\UVWX,\XWVU}(\bldt_1,\bldxi_1)=\tilde{d}(\PONM)e^{-i\left\{\tilde{a}(\PONM)(|\bldt_1|^2+|\bldxi_1|^2)-\bldt_1\cdot M_1\bldxi_1\right\}}
\end{align}
and 
\begin{align}\label{P7eqn13}
\mathcal{K}^j_{\PQRS,\SRQP}(\bldt_2,\bldxi_2)=\tilde{d}(\MNOP)e^{-j\left\{\tilde{a}(\MNOP)(|\bldt_2|^2+|\bldxi_2|^2)-\bldt_2\cdot M_2\bldxi_2\right\}},
\end{align}
with $\PONM=\frac{\UVWX+\XWVU}{2},~\DCBA=\frac{\UVWX-\XWVU}{2}, ~\tilde{a}(\PONM)=\frac{\cos\PONM}{2},~\tilde{b}(\PONM,\DCBA)=\frac{\cos\DCBA}{\sin\PONM},~\tilde{c}(\PONM,\DCBA)=\frac{\sin\DCBA}{\sin\PONM},~\tilde{d}(\PONM)=\frac{ie^{-i\PONM}}{2\pi\sin\PONM},$  
$M_1=
\begin{pmatrix}
\tilde{b}(\PONM,\DCBA) & \tilde{c}(\PONM,\DCBA)\\
-\tilde{c}(\PONM,\DCBA) & \tilde{b}(\PONM,\DCBA)
\end{pmatrix}
$
and 
$\MNOP=\frac{\PQRS+\SRQP}{2},~\ABCD=\frac{\PQRS-\SRQP}{2}, ~\tilde{a}(\MNOP)=\frac{\cos\MNOP}{2},~\tilde{b}(\MNOP,\ABCD)=\frac{\cos\ABCD}{\sin\MNOP},~\tilde{c}(\MNOP,\ABCD)=\frac{\sin\ABCD}{\sin\MNOP},~\tilde{d}(\MNOP)=\frac{je^{-j\MNOP}}{2\pi\sin\MNOP},$  
$M_2=
\begin{pmatrix}
\tilde{b}(\MNOP,\ABCD) & \tilde{c}(\MNOP,\ABCD)\\
-\tilde{c}(\MNOP,\ABCD) & \tilde{b}(\MNOP,\ABCD)
\end{pmatrix}.
$
\end{definition}
The corresponding inversion formula is given by
\begin{align}\label{P7eqn14}
f(\bldt)=\int_{\mathbb{R}^4}\overline{\mathcal{K}^i_{\UVWX,\XWVU}(\bldt_1,\bldxi_1)}(\mathcal{F}^{\boldsymbol\alpha,\boldsymbol\beta}_{\mathbb{H}} f)(\bldxi)\overline{\mathcal{K}^j_{\PQRS,\SRQP}(\bldt_2,\bldxi_2)}d\bldxi
\end{align}\label{P7Remark_Particular-Case-of-QCFrFT}
\begin{remark} For $\boldsymbol\alpha=\boldsymbol\beta,$ the kernels $\mathcal{K}^i_{\UVWX,\UVWX}(\bldt_1,\bldxi_1)$ and $\mathcal{K}^j_{\PQRS,\PQRS}(\bldt_2,\bldxi_2)$ are the tensor product of two one-dimensional FrFT kernel. Thus, the  QCFrFT reduces to the two sided QFrFT of the function $f\in L^2_{\mathbb{H}}(\mathbb{R}^4).$ Moreover, if $\boldsymbol\alpha=\boldsymbol\beta=(\frac{\pi}{2},\frac{\pi}{2}),$ the QCFrFT reduces to the two sided QFT of the function $f\in L^2_{\mathbb{H}}(\mathbb{R}^4).$
\end{remark}
\subsection{QCFrFT in terms of QFT}
We now obtain an important relation between QCFrFT and QFT.
\begin{align*}
(\mathcal{F}^{\boldsymbol\alpha,\boldsymbol\beta}_{\mathbb{H}} f)(\bldxi)&=\int_{\mathbb{R}^4}\mathcal{K}^i_{\UVWX,\XWVU}(\bldt_1,\bldxi_1)f(\bldt)\mathcal{K}^j_{\PQRS,\SRQP}(\bldt_2,\bldxi_2)d\bldt\\
&=\tilde{d}_0(\PONM)e^{-i\tilde{a}(\PONM)|\bldxi_1|^2}\left\{\int_{\mathbb{R}^4} \frac{1}{2\pi}e^{i\bldt_1\cdot M_1\bldxi_1}\tilde{f}(\bldt)\frac{1}{2\pi}e^{j\bldt_2\cdot M_2\bldxi_2}d\bldt\right\}\tilde{d}_0(\MNOP)e^{-j\tilde{a}(\MNOP)|\bldxi_2|^2},
\end{align*}
where $\tilde{d}_0(\PONM)=\frac{ie^{-i\PONM}}{\sin\PONM},$ $\tilde{d}_0(\MNOP)=\frac{je^{-j\MNOP}}{\sin\MNOP}$ and 
\begin{align}\label{P7eqn15}
\tilde{f}(\bldt)=e^{-i\tilde{a}(\PONM)|\bldt_1|^2}f(\bldt)e^{-j\tilde{a}(\MNOP)|\bldt_2|^2}.
\end{align}
Thus,
\begin{align}\label{P7eqn16}
(\mathcal{F}^{\boldsymbol\alpha,\boldsymbol\beta}_{\mathbb{H}} f)(\bldxi)&=\tilde{d}_0(\PONM)e^{-i\tilde{a}(\PONM)|\bldxi_1|^2}\left(\mathcal{F}_{\mathbb{H}}\tilde{f}\right)(-M_1\bldxi_1,-M_2\bldxi_2)\tilde{d}_0(\MNOP)e^{-j\tilde{a}(\MNOP)|\bldxi_2|^2}
\end{align}
where
\begin{align}\label{P7eqn17}
\left(\mathcal{F}_{\mathbb{H}}\tilde{f}\right)(\bldxi)=\int_{\mathbb{R}^4}\frac{1}{2\pi}e^{-i\bldt_1\cdot\bldxi_1}\tilde{f}(\bldt)\frac{1}{2\pi}e^{-j\bldt_2\cdot\bldxi_2}d\bldt
\end{align}
is the quaternion Fourier transform of the function in $\mathbb{R}^4.$ \\

\textbf{Example of QCFrFT:} 

With the assumption that $\boldsymbol\alpha,~\boldsymbol\beta\in\mathbb{R}^2$ satisfies the conditions in definition \ref{P7Defn3.1}. We see that the QCFrFT $\mathcal{F}^{\boldsymbol\alpha,\boldsymbol\beta}_{\mathbb{H}} $ of the function
$f(\bldt)=e^{i\tilde{a}(\PONM)|\bldt_1|^2}e^{-A|\bldt_1|^2}e^{-B|\bldt_2|^2}e^{j\tilde{a}(\PONM)|\bldt_2|^2},~A,B>0$ is given as 
$$\left(\mathcal{F}^{\boldsymbol\alpha,\boldsymbol\beta}_{\mathbb{H}} f\right)(\bldxi)=\frac{1}{4AB}\tilde{d}_0(\PONM)e^{-i\tilde{a}(\PONM)|\bldxi_1|^2}e^{-\frac{1}{4}\left(\frac{|M_1\bldxi_1|^2}{A}+\frac{|M_2\bldxi_2|^2}{B}\right)}\tilde{d}_0(\MNOP)e^{-j\tilde{a}(\MNOP)|\bldxi_2|^2},$$
where $M_1$ and $M_2$ are the matrices given in definition \ref{P7Defn3.1}.\\

We now obtain the following important inequality, called the H-Y inequality,   based on the relation \eqref{P7eqn16} among QCFrFT and the QFT.
\begin{theorem}\label{P7Theo3.1} Let $f\in L^p_\mathbb{H}(\mathbb{R}^4)$ and $1\leq p\leq 2,$ $\frac{1}{p}+\frac{1}{q}=1,$ then
\begin{align}\label{P7eqn18}
\|\mathcal{F}^{\boldsymbol\alpha,\boldsymbol\beta}_{\mathbb{H}} f\|_{L^q_\mathbb{H}(\mathbb{R}^4)}\leq \left|\sin\left(\frac{\UVWX+\XWVU}{2}\right)\right|^{\frac{2}{q}-1}\left|\sin\left(\frac{\PQRS+\SRQP}{2}\right)\right|^{\frac{2}{q}-1}\|f\|_{L^p_\mathbb{H}(\mathbb{R}^4)},
\end{align}
where $A_p=\left(\frac{p^{\frac{1}{p}}}{q^{\frac{1}{q}}}\right)^\frac{1}{2}.$
\end{theorem}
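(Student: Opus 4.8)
The plan is to leverage the factorization \eqref{P7eqn16}, which writes the QCFrFT as a chirp-modulated, linearly-rescaled QFT, and then to invoke the known sharp H-Y inequality for the two-sided QFT. First I would take the $L^q_\mathbb{H}(\mathbb{R}^4)$ norm on both sides of \eqref{P7eqn16}. Because the modulus on $\mathbb{H}$ is multiplicative, $|rs|=|r||s|$, and because the two chirps $e^{-i\tilde{a}(\PONM)|\bldxi_1|^2}$ and $e^{-j\tilde{a}(\MNOP)|\bldxi_2|^2}$ are unimodular, taking the modulus of the integrand collapses everything except $|\tilde{d}_0(\PONM)|$, $|\tilde{d}_0(\MNOP)|$, and $|(\mathcal{F}_\mathbb{H}\tilde f)(-M_1\bldxi_1,-M_2\bldxi_2)|$. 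From $\tilde{d}_0(\PONM)=\frac{ie^{-i\PONM}}{\sin\PONM}$ and $\tilde{d}_0(\MNOP)=\frac{je^{-j\MNOP}}{\sin\MNOP}$ one reads off $|\tilde{d}_0(\PONM)|=|\sin\PONM|^{-1}$ and $|\tilde{d}_0(\MNOP)|=|\sin\MNOP|^{-1}$, giving
\begin{align*}
\|\mathcal{F}^{\boldsymbol\alpha,\boldsymbol\beta}_{\mathbb{H}} f\|_{L^q_\mathbb{H}(\mathbb{R}^4)}^q = \frac{1}{|\sin\PONM|^q\,|\sin\MNOP|^q}\int_{\mathbb{R}^4}\left|(\mathcal{F}_\mathbb{H}\tilde f)(-M_1\bldxi_1,-M_2\bldxi_2)\right|^q d\bldxi.
\end{align*}

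Next I would perform the linear change of variables $\bldxi_1\mapsto -M_1\bldxi_1$, $\bldxi_2\mapsto -M_2\bldxi_2$ in the remaining integral. Since $M_1,M_2$ have the form $\begin{pmatrix}\tilde b & \tilde c\\ -\tilde c & \tilde b\end{pmatrix}$, a direct computation gives $\det M_1=\tilde b(\PONM,\DCBA)^2+\tilde c(\PONM,\DCBA)^2=\sin^{-2}\PONM$ and likewise $\det M_2=\sin^{-2}\MNOP$, while the sign flip contributes nothing to the Jacobian modulus. Hence the integral equals $|\sin\PONM|^2\,|\sin\MNOP|^2\,\|\mathcal{F}_\mathbb{H}\tilde f\|_{L^q_\mathbb{H}(\mathbb{R}^4)}^q$, and combining the powers of sine yields
\begin{align*}
\|\mathcal{F}^{\boldsymbol\alpha,\boldsymbol\beta}_{\mathbb{H}} f\|_{L^q_\mathbb{H}(\mathbb{R}^4)} = |\sin\PONM|^{\frac{2}{q}-1}\,|\sin\MNOP|^{\frac{2}{q}-1}\,\|\mathcal{F}_\mathbb{H}\tilde f\|_{L^q_\mathbb{H}(\mathbb{R}^4)},
\end{align*}
which already produces the exact sine exponents $\tfrac{2}{q}-1$ of \eqref{P7eqn18}, with $\PONM=\frac{\UVWX+\XWVU}{2}$ and $\MNOP=\frac{\PQRS+\SRQP}{2}$.

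It then remains to bound $\|\mathcal{F}_\mathbb{H}\tilde f\|_{L^q_\mathbb{H}(\mathbb{R}^4)}$ by $\|\tilde f\|_{L^p_\mathbb{H}(\mathbb{R}^4)}$, for which I would apply the sharp Hausdorff--Young inequality for the two-sided QFT (\cite{lian2020sharp}); its constant, built from the Babenko--Beckner factor $A_p=(p^{1/p}/q^{1/q})^{1/2}$, supplies the multiplicative constant that should accompany \eqref{P7eqn18}. Finally, since the chirps in \eqref{P7eqn15} are unimodular, $|\tilde f(\bldt)|=|f(\bldt)|$ pointwise, whence $\|\tilde f\|_{L^p_\mathbb{H}(\mathbb{R}^4)}=\|f\|_{L^p_\mathbb{H}(\mathbb{R}^4)}$. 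Chaining the three estimates closes the argument.

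The main obstacle is the QFT step: one must have on hand a sharp (or at least bounded-constant) H-Y inequality for the two-sided QFT on $\mathbb{R}^4$ carrying the paired $2$-dimensional kernels $\frac{1}{2\pi}e^{-i\bldt_1\cdot\bldxi_1}$ and $\frac{1}{2\pi}e^{-j\bldt_2\cdot\bldxi_2}$, together with the correct power of $A_p$ dictated by that normalization. The non-commutativity of $\mathbb{H}$ must be respected throughout, as the left and right factors cannot be reordered; however, this only affects the phases, which disappear under $|\cdot|$, so the genuinely analytic content (the optimal constant, obtained via Gaussian extremizers or Riesz--Thorin interpolation between the $L^1\!\to\! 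L^\infty$ and $L^2\!\to\! L^2$ bounds) lives entirely inside that QFT inequality. The change of variables is routine but warrants care, since $M_1,M_2$ are full rotation--scaling matrices rather than scalar multiples of the identity.
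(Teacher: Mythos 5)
Your proposal is correct and takes essentially the same route as the paper's own proof: factor the QCFrFT through the QFT via \eqref{P7eqn16}, use the unimodularity of the chirps together with $|\tilde d_0(\gamma_1)|=|\sin\gamma_1|^{-1}$, $|\tilde d_0(\gamma_2)|=|\sin\gamma_2|^{-1}$, change variables with $|\det M_1|=\sin^{-2}\gamma_1$, $|\det M_2|=\sin^{-2}\gamma_2$, apply the sharp Hausdorff--Young inequality for the two-sided QFT, and finish with $\|\tilde f\|_{L^p_\mathbb{H}(\mathbb{R}^4)}=\|f\|_{L^p_\mathbb{H}(\mathbb{R}^4)}$ from \eqref{P7eqn15}. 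Like the paper, your argument actually produces the Babenko--Beckner factor $A_p^4$ on the right-hand side, which the displayed inequality \eqref{P7eqn18} omits; this is harmless since $A_p\leq 1$ for $1\leq p\leq 2$, and indeed the paper reinstates that factor (as $A_{2\alpha_0}^8$) when it invokes the theorem in the proof of Theorem \ref{P7Theo3.3}.
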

\begin{proof}
Using relation \eqref{P7eqn16}, we get
\begin{align*}
\|\mathcal{F}^{\boldsymbol\alpha,\boldsymbol\beta}_{\mathbb{H}} f\|_{L^q_\mathbb{H}(\mathbb{R}^4)}
&=|\tilde{d}_0(\PONM)||\tilde{d}_0(\MNOP)|\left(\int_{\mathbb{R}^4}\left|\left(\mathcal{F}_\mathbb{H}\tilde{f}\right)(-M_1\bldxi_1,-M_2\bldxi_2)\right|^qd\bldxi\right)^\frac{1}{q}\\
&=\frac{|\tilde{d}_0(\PONM)||\tilde{d}_0(\MNOP)|}{\left(|\det(-M_1)||\det(-M_2)|\right)^{\frac{1}{q}}}\|\mathcal{F}_\mathbb{H}\tilde{f}\|_{L^q_\mathbb{H}(\mathbb{R}^4)}.
\end{align*}
Applying the sharp Hausdorff-Young inequality (\cite{lian2020sharp}) for the QFT, yields
\begin{align*}
\|\mathcal{F}^{\boldsymbol\alpha,\boldsymbol\beta}_{\mathbb{H}} f\|_{L^q_\mathbb{H}(\mathbb{R}^4)}&\leq \frac{|\tilde{d}_0(\PONM)||\tilde{d}_0(\MNOP)|A_p^4}{\left(|\det(-M_1)||\det(-M_2)|\right)^\frac{1}{q}}\|\tilde{f}\|_{L^p_\mathbb{H}(\mathbb{R}^4)}\\
&=\left|\sin\left(\frac{\UVWX+\XWVU}{2}\right)\right|^{\frac{2}{q}-1}\left|\sin\left(\frac{\PQRS+\SRQP}{2}\right)\right|^{\frac{2}{q}-1}A_p^4\|\tilde{f}\|_{L^p_\mathbb{H}(\mathbb{R}^4)}.
\end{align*}
By virtue of  \eqref{P7eqn15} we obtain \eqref{P7eqn18}. This finishes the proof.
\end{proof}
\begin{remark}
\begin{itemize}
\item For $\boldsymbol\alpha=\boldsymbol\beta,$ equation \eqref{P7eqn18} reduces to the sharp Hausdorff-Young inequality for the QFrFT  of the function $f\in L^2_{\mathbb{H}}(\mathbb{R}^4).$
\item For $\boldsymbol\alpha=\boldsymbol\beta=(\frac{\pi}{2},\frac{\pi}{2}),$ equation \eqref{P7eqn18} reduces to the sharp Hausdorff-Young inequality for the QFT  of the function $f\in L^2_{\mathbb{H}}(\mathbb{R}^4).$
\end{itemize}
\end{remark}
In what follows we obtain the Parseval's formula associated with  the QCFrFT.
\begin{theorem}\label{P7Theo3.2}
If $f,g\in L^2_\mathbb{H}(\mathbb{R}^4),$ then 
\begin{align}\label{P7eqn19}
\langle \mathcal{F}^{\boldsymbol\alpha,\boldsymbol\beta}_{\mathbb{H}} f, \mathcal{F}^{\boldsymbol\alpha,\boldsymbol\beta}_{\mathbb{H}} g \rangle=\langle f,g\rangle.
\end{align}
In particular,
\begin{align}\label{P7eqn20}
\|f\|^2_{L^2_\mathbb{H}(\mathbb{R}^4)}=\|\mathcal{F}^{\boldsymbol\alpha,\boldsymbol\beta}_{\mathbb{H}} f\|^2_{L^2_\mathbb{H}(\mathbb{R}^4)}.
\end{align}
\end{theorem}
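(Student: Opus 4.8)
The plan is to transfer the identity to the two-sided QFT via the factorization \eqref{P7eqn16} and then invoke the QFT Plancherel theorem. Write $F=\mathcal{F}^{\boldsymbol\alpha,\boldsymbol\beta}_{\mathbb{H}}f$ and $G=\mathcal{F}^{\boldsymbol\alpha,\boldsymbol\beta}_{\mathbb{H}}g$, and abbreviate $P_1=\tilde{d}_0(\PONM)e^{-i\tilde{a}(\PONM)|\bldxi_1|^2}$ and $P_2=\tilde{d}_0(\MNOP)e^{-j\tilde{a}(\MNOP)|\bldxi_2|^2}$, so that \eqref{P7eqn16} reads $F=P_1\,(\mathcal{F}_{\mathbb{H}}\tilde{f})(-M_1\bldxi_1,-M_2\bldxi_2)\,P_2$, and likewise for $G$. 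I would start from $\langle F,G\rangle=Sc\int_{\mathbb{R}^4}F(\bldxi)\overline{G(\bldxi)}\,d\bldxi$ and expand $\overline{G}$ using the order-reversing conjugation rule $\overline{rs}=\bar{s}\,\bar{r}$, which places $\overline{P_2}$ on the left and $\overline{P_1}$ on the right of $\overline{G}$.

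The computational core is that all the chirp/constant factors collapse to real scalars once $Sc$ is applied. Because $P_2$ sits at the right end of $F$ and $\overline{P_2}$ at the left end of $\overline{G}$, their product is $P_2\overline{P_2}=|P_2|^2=1/\sin^2\MNOP\in\mathbb{R}$; the outer factors $P_1$ and $\overline{P_1}$ can then be brought together through the cyclic symmetry $Sc(qrs)=Sc(rsq)=Sc(sqr)$, giving $\overline{P_1}P_1=|P_1|^2=1/\sin^2\PONM$. Hence $Sc(F\overline{G})=\frac{1}{\sin^2\PONM\,\sin^2\MNOP}\,Sc\big((\mathcal{F}_{\mathbb{H}}\tilde{f})(-M_1\bldxi_1,-M_2\bldxi_2)\,\overline{(\mathcal{F}_{\mathbb{H}}\tilde{g})(-M_1\bldxi_1,-M_2\bldxi_2)}\big)$.

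Next I would change variables $\boldsymbol\eta=(\boldsymbol\eta_1,\boldsymbol\eta_2)=(-M_1\bldxi_1,-M_2\bldxi_2)$. Since $\det M_1=\tilde{b}(\PONM,\DCBA)^2+\tilde{c}(\PONM,\DCBA)^2=1/\sin^2\PONM$ and $\det M_2=1/\sin^2\MNOP$, the Jacobian produces exactly $\sin^2\PONM\,\sin^2\MNOP$, cancelling the prefactor and yielding $\langle F,G\rangle=\langle \mathcal{F}_{\mathbb{H}}\tilde{f},\mathcal{F}_{\mathbb{H}}\tilde{g}\rangle$. Applying the Plancherel theorem for the two-sided QFT (\cite{cheng2019plancherel},\cite{lian2020sharp}) gives $\langle \mathcal{F}_{\mathbb{H}}\tilde{f},\mathcal{F}_{\mathbb{H}}\tilde{g}\rangle=\langle \tilde{f},\tilde{g}\rangle$. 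Finally, from \eqref{P7eqn15} the defining chirps of $\tilde{f},\tilde{g}$ cancel in pairs: the $j$-chirps meet in the middle as $e^{-j\tilde{a}(\MNOP)|\bldt_2|^2}e^{j\tilde{a}(\MNOP)|\bldt_2|^2}=1$, and the surviving $i$-chirps are removed by one more use of the cyclic $Sc$ identity, so that $Sc(\tilde{f}\,\overline{\tilde{g}})=Sc(f\overline{g})$ and thus $\langle\tilde{f},\tilde{g}\rangle=\langle f,g\rangle$, which is \eqref{P7eqn19}. Setting $g=f$ then gives \eqref{P7eqn20}.

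The main obstacle is purely the non-commutative bookkeeping: every rearrangement must be justified by the order-reversing conjugation and by the cyclic invariance of the scalar part, never by treating $e^{-i(\cdots)}$ and $e^{-j(\cdots)}$ as commuting complex exponentials. Keeping the $i$-valued factors consistently to the left of $f,g$ and the $j$-valued factors to their right is exactly what legitimizes the scalar collapses in the two middle paragraphs. The only external ingredient is the QFT Plancherel theorem, which I take as established.
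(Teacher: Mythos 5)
Your proof is correct and follows essentially the same route as the paper's: both rest on the factorization \eqref{P7eqn16}, the QFT Plancherel theorem, the cyclic symmetry of $Sc$ together with the collapses $P_k\overline{P_k}=|\tilde d_0|^2$, the Jacobian $|\det(-M_1)||\det(-M_2)|=1/(\sin^2\gamma_1\sin^2\gamma_2)$, and the chirp cancellation giving $\langle\tilde f,\tilde g\rangle=\langle f,g\rangle$. The only difference is cosmetic: you run the chain from $\langle\mathcal{F}^{\boldsymbol\alpha,\boldsymbol\beta}_{\mathbb{H}}f,\mathcal{F}^{\boldsymbol\alpha,\boldsymbol\beta}_{\mathbb{H}}g\rangle$ down to $\langle f,g\rangle$, whereas the paper starts from $\langle\tilde f,\tilde g\rangle$ and works outward in both directions.
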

\begin{proof}
Applying Parseval's formula for the QFT, we get
\begin{align*}
\langle \tilde{f},\tilde{g}\rangle&=\langle \mathcal{F}_{\mathbb{H}}\tilde{f},\mathcal{F}_{\mathbb{H}}\tilde{g}\rangle\\
&=Sc\left[|\det(-M_1)||\det(-M_2)|\int_{\mathbb{R}^4}\left(\mathcal{F}_{\mathbb{H}}\tilde{f}\right)(-M_1\bldxi_1,-M_2\bldxi_2)\overline{\left(\mathcal{F}_{\mathbb{H}}\tilde{g}\right)(-M_1\bldxi_1,-M_2\bldxi_2)}d\bldxi\right].
\end{align*}
Using relation \eqref{P7eqn16}, we have
\begin{align*}
\langle \tilde{f},\tilde{g}\rangle&=\frac{|\det(-M_1)||\det(-M_2)|}{|\tilde{d}_0(\MNOP)|^2}\int_{\mathbb{R}^4}Sc\left[\frac{1}{\tilde{d}_0(\PONM)}e^{i\tilde{a}(\PONM)|\bldxi_1|^2}\left(\mathcal{F}^{\boldsymbol\alpha,\boldsymbol\beta}_{\mathbb{H}} f \right)(\bldxi)\overline{\left(\mathcal{F}^{\boldsymbol\alpha,\boldsymbol\beta}_{\mathbb{H}} g\right)(\bldxi)}\frac{1}{\overline{\tilde{d}_0(\PONM)}}e^{-i\tilde{a}(\PONM)|\bldxi_1|^2}\right]d\bldxi\\
&=\frac{|\det(-M_1)||\det(-M_2)|}{|\tilde{d}_0(\MNOP)|^2|\tilde{d}_0(\PONM)|^2}\int_{\mathbb{R}^4}Sc\left[\left(\mathcal{F}^{\boldsymbol\alpha,\boldsymbol\beta}_{\mathbb{H}} f \right)(\bldxi)\overline{\left(\mathcal{F}^{\boldsymbol\alpha,\boldsymbol\beta}_{\mathbb{H}} g\right)(\bldxi)}\right]d\bldxi\\
&=\langle \mathcal{F}^{\boldsymbol\alpha,\boldsymbol\beta}_{\mathbb{H}} f, \mathcal{F}^{\boldsymbol\alpha,\boldsymbol\beta}_{\mathbb{H}} g \rangle.
\end{align*}
Again, using equation \eqref{P7eqn15}, it can be shown that $\langle \tilde{f}, \tilde{g}\rangle=\langle f, g\rangle.$ Hence \eqref{P7eqn19} follows. With $f=g,$ in \eqref{P7eqn19}, we have \eqref{P7eqn20}.
 
This finishes the proof.
\end{proof}

\subsection{R\`enyi entropy uncertainty principle}
The R\`enyi entropy UPs for the proposed QCFrFT is obtained in this subsection. Similar findings for the complex FrFT can be seen in \cite{guanlei2009generalized}. These UPs for the QPFT and two sided quaternion QPFT have recently been discovered in \cite{shah2021uncertainty} and \cite{gupta2022short},\cite{bhat2022uncertainty} respectively. We recall the following.
\begin{definition}\label{P7Defn3.2}
\cite{dembo1991information, guanlei2009generalized}
If $P$ is a probability density function on $\mathbb{R}^n,$ then the R\`enyi entropy of $P$ is defined by
\begin{align}\label{P7eqn21}
H_s(P)=\frac{1}{1-s}\log \left(\int_{\mathbb{R}^n}[P(\bldt)]^s d\bldt\right),~s>0, s\neq 1.
\end{align}
If $s\rightarrow 1,$ then \eqref{P7eqn21} results in the Shannon entropy given by
\begin{align}\label{P7eqn22}
E(P)=-\int_{\mathbb{R}^n}P(\bldt)\log [P(\bldt)]d\bldt.
\end{align}
\end{definition}
In what follows, we obtain the R\`enyi entropy UP for QCFrFT. 
\begin{theorem}\label{P7Theo3.3}
If $f\in L^2_\mathbb{H}(\mathbb{R}^4),$ $\frac{1}{2}<\alpha_0<1$ and $\frac{1}{\alpha_0}+\frac{1}{\beta_0}=2,$ then 
\begin{align*}
H_{\alpha_0}(|f|^2)+H_{\beta_0}\left(\left|\left(\mathcal{F}^{\boldsymbol\alpha,\boldsymbol\beta}_\mathbb{H}f\right)(\bldxi)\right|^{2}\right)\geq\frac{2}{\alpha_0-1}\log(2\alpha_0)+\frac{2}{\beta_0-1}\log(2\beta_0)+2\log\left(\left|\sin\left(\frac{\UVWX+\XWVU}{2}\right)\right|\left|\sin\left(\frac{\PQRS+\SRQP}{2}\right)\right|\right).
\end{align*}
\end{theorem}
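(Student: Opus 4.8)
The plan is to derive the entropy bound directly from the sharp Hausdorff--Young inequality of Theorem~\ref{P7Theo3.1}, applied with the single pair of conjugate exponents $p=2\alpha_0$ and $q=2\beta_0$. First I would assume without loss of generality that $\|f\|_{L^2_\mathbb{H}(\mathbb{R}^4)}=1$, so that $|f|^2$ is an honest probability density on $\mathbb{R}^4$; by the Parseval relation~\eqref{P7eqn20} of Theorem~\ref{P7Theo3.2} the quantity $|\mathcal{F}^{\boldsymbol\alpha,\boldsymbol\beta}_\mathbb{H}f|^2$ is then automatically a probability density as well, and both R\`enyi entropies in the statement are well defined. The hypotheses $\tfrac12<\alpha_0<1$ and $\tfrac1{\alpha_0}+\tfrac1{\beta_0}=2$ translate precisely into $1<p<2$, $2<q<\infty$ and $\tfrac1p+\tfrac1q=1$, so Theorem~\ref{P7Theo3.1} is applicable with this choice.

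Next I would take logarithms in $\|\mathcal{F}^{\boldsymbol\alpha,\boldsymbol\beta}_\mathbb{H}f\|_{L^q_\mathbb{H}}\le K\|f\|_{L^p_\mathbb{H}}$, where $K=|\sin\PONM|^{\frac2q-1}|\sin\MNOP|^{\frac2q-1}A_p^4$ is the sharp constant produced in the proof of Theorem~\ref{P7Theo3.1}, and read the two norms as R\`enyi quantities. Since $\int_{\mathbb{R}^4}|f|^{2\alpha_0}=\int_{\mathbb{R}^4}(|f|^2)^{\alpha_0}$ and $\int_{\mathbb{R}^4}|\mathcal{F}^{\boldsymbol\alpha,\boldsymbol\beta}_\mathbb{H}f|^{2\beta_0}=\int_{\mathbb{R}^4}(|\mathcal{F}^{\boldsymbol\alpha,\boldsymbol\beta}_\mathbb{H}f|^2)^{\beta_0}$, Definition~\ref{P7Defn3.2} gives $\log\int(|f|^2)^{\alpha_0}=(1-\alpha_0)H_{\alpha_0}(|f|^2)$ and the analogous identity on the transform side. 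Substituting these yields
\[
\frac{1-\beta_0}{2\beta_0}\,H_{\beta_0}\!\left(|\mathcal{F}^{\boldsymbol\alpha,\boldsymbol\beta}_\mathbb{H}f|^2\right)\le \log K+\frac{1-\alpha_0}{2\alpha_0}\,H_{\alpha_0}\!\left(|f|^2\right).
\]

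The decisive step, and essentially the only one that demands care, is the sign bookkeeping. Because $\tfrac12<\alpha_0<1$ forces $\beta_0=\tfrac{\alpha_0}{2\alpha_0-1}>1$, the coefficient $\tfrac{1-\beta_0}{2\beta_0}$ is \emph{negative}, and it is exactly this negativity that turns the Hausdorff--Young upper bound into a \emph{lower} bound for the sum of the two entropies. Concretely, the constraint $\tfrac1{\alpha_0}+\tfrac1{\beta_0}=2$ is equivalent to $\tfrac{1-\alpha_0}{2\alpha_0}=\tfrac{\beta_0-1}{2\beta_0}=:c>0$, so the displayed inequality collapses to $-c\big(H_{\alpha_0}(|f|^2)+H_{\beta_0}(|\mathcal{F}^{\boldsymbol\alpha,\boldsymbol\beta}_\mathbb{H}f|^2)\big)\le\log K$; dividing by $-c$ reverses it and gives
\[
H_{\alpha_0}(|f|^2)+H_{\beta_0}\!\left(|\mathcal{F}^{\boldsymbol\alpha,\boldsymbol\beta}_\mathbb{H}f|^2\right)\ge \frac{2\alpha_0}{\alpha_0-1}\,\log K.
\]

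It then remains to unwind $\tfrac{2\alpha_0}{\alpha_0-1}\log K$ into the three advertised terms, a purely algebraic simplification again powered by the constraint. Using $\tfrac2q-1=\tfrac{1-\beta_0}{\beta_0}=\tfrac{\alpha_0-1}{\alpha_0}$ one finds $\tfrac{2\alpha_0}{\alpha_0-1}\big(\tfrac2q-1\big)=2$, so the two sine factors in $K$ contribute exactly $2\log\big(|\sin\PONM|\,|\sin\MNOP|\big)$. For the remaining factor I would expand $\log A_p=\tfrac12\big(\tfrac1p\log p-\tfrac1q\log q\big)$ with $p=2\alpha_0$, $q=2\beta_0$, so that $4\log A_p=\tfrac1{\alpha_0}\log(2\alpha_0)-\tfrac1{\beta_0}\log(2\beta_0)$; multiplying by $\tfrac{2\alpha_0}{\alpha_0-1}$ and invoking $\tfrac1{\alpha_0}+\tfrac1{\beta_0}=2$ once more to identify $-\tfrac{2\alpha_0}{(\alpha_0-1)\beta_0}=\tfrac{2}{\beta_0-1}$ produces precisely $\tfrac{2}{\alpha_0-1}\log(2\alpha_0)+\tfrac{2}{\beta_0-1}\log(2\beta_0)$. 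Summing the three contributions gives the asserted inequality. I anticipate no analytic obstacle: the whole argument rests on Theorems~\ref{P7Theo3.1} and~\ref{P7Theo3.2}, and the genuine work is confined to the sign analysis and the constraint-driven cancellations described above.
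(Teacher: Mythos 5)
Your proposal is correct and takes essentially the same route as the paper: both apply the sharp Hausdorff--Young inequality of Theorem~\ref{P7Theo3.1} (with the constant $A_p^4$, which the paper uses in its proof even though it is dropped in the display \eqref{P7eqn18}) at the conjugate pair $p=2\alpha_0$, $q=2\beta_0$, and then convert it into the entropy bound via the constraint $\tfrac{1}{\alpha_0}+\tfrac{1}{\beta_0}=2$. Your linear sign-bookkeeping after taking logarithms is just a reorganization of the paper's equivalent step of raising the squared inequality \eqref{P7eqn24} to the power $\tfrac{\alpha_0}{1-\alpha_0}$ before taking logarithms (and, incidentally, your version gets the signs right where the paper's intermediate displays, e.g.\ \eqref{P7eqn25}, contain sign typos that cancel out in its final result).
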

\begin{proof}
By inequality \eqref{P7eqn18}, we have 
\begin{align}\label{P7eqn23}
\left(\int_{\mathbb{R}^4}\left|\left(\mathcal{F}^{\boldsymbol\alpha,\boldsymbol\beta}_{\mathbb{H}}f\right)(\bldxi)\right|^qd\bldxi\right)^{\frac{1}{q}}\leq \left|\sin\left(\frac{\UVWX+\XWVU}{2}\right)\right|^{\frac{2}{q}-1}\left|\sin\left(\frac{\PQRS+\SRQP}{2}\right)\right|^{\frac{2}{q}-1}\left(\int_{\mathbb{R}^4}|f(\bldt)|^pd\bldt\right)^{\frac{1}{p}}.
\end{align} 
Putting $p=2\alpha_0$ and $q=2\beta_0,$ in equation \eqref{P7eqn23}, we have
\begin{align*}
\left(\int_{\mathbb{R}^4}\left|\left(\mathcal{F}^{\boldsymbol\alpha,\boldsymbol\beta}_{\mathbb{H}}f\right)(\bldxi)\right|^{2\beta_0}d\bldxi\right)^{\frac{1}{2\beta_0}}\leq \left|\sin\left(\frac{\UVWX+\XWVU}{2}\right)\right|^{\frac{1}{\beta_0}-1}\left|\sin\left(\frac{\PQRS+\SRQP}{2}\right)\right|^{\frac{1}{\beta_0}-1}\left(\int_{\mathbb{R}^4}|f(\bldt)|^{2\alpha_0}d\bldt\right)^{\frac{1}{2\alpha_0}}.
\end{align*}
This implies
\begin{align}\label{P7eqn24}
\frac{\left|\sin\left(\frac{\UVWX+\XWVU}{2}\right)\right|^{2-\frac{2}{\beta_0}}\left|\sin\left(\frac{\PQRS+\SRQP}{2}\right)\right|^{2-\frac{2}{\beta_0}}}{A_{2\alpha_0}^8}\leq \left(\int_{\mathbb{R}^4}|f(\bldt)|^{2\alpha_0}d\bldt\right)^{\frac{1}{\alpha_0}}\left(\int_{\mathbb{R}^4}\left|\left(\mathcal{F}^{\boldsymbol\alpha,\boldsymbol\beta}_{\mathbb{H}}f\right)(\bldxi)\right|^{2\beta_0}d\bldxi\right)^{-\frac{1}{\beta_0}}.
\end{align}
Since $\frac{1}{\alpha_0}+\frac{1}{\beta_0}=2,$ we have
\begin{align}\label{P7eqn25}
\frac{\alpha_0}{1-\alpha_0}=\frac{\beta_0}{1-\beta_0}.
\end{align}
Raising to the power $\frac{\alpha_0}{1-\alpha_0}$ in \eqref{P7eqn24} and using \eqref{P7eqn25}, we get
\begin{align*}
\frac{\left|\sin\left(\frac{\UVWX+\XWVU}{2}\right)\right|^2\left|\sin\left(\frac{\PQRS+\SRQP}{2}\right)\right|^2}{\left(A_{2\alpha_0}^8\right)^{\frac{\alpha_0}{1-\alpha_0}}}\leq \left(\int_{\mathbb{R}^4}|f(\bldt)|^{2\alpha_0}d\bldt\right)^{\frac{1}{1-\alpha_0}}\left(\int_{\mathbb{R}^4}\left|\left(\mathcal{F}^{\boldsymbol\alpha,\boldsymbol\beta}_{\mathbb{H}}f\right)(\bldxi)\right|^{2\beta_0}d\bldxi\right)^{\frac{1}{1-\beta_0}}.
\end{align*}
This leads to 
\begin{align*}
-2\log\left(\left|\sin\left(\frac{\UVWX+\XWVU}{2}\right)\right|\left|\sin\left(\frac{\PQRS+\SRQP}{2}\right)\right|\right)-&\frac{8\alpha_0}{1-\alpha_0}\log\left(A_{2\alpha_0}\right)\notag\\
&\leq \frac{1}{1-\alpha_0}\log \left(\int_{\mathbb{R}^4}|f(\bldt)|^{2\alpha_0}d\bldt\right)+\frac{1}{1-\beta_0}\log \left(\int_{\mathbb{R}^4}\left|\left(\mathcal{F}^{\boldsymbol\alpha,\boldsymbol\beta}_\mathbb{H}\right)(\bldxi)\right|^{2\beta_0}d\bldxi\right).
\end{align*}
This implies
\begin{align}\label{P7eqn27}
H_{\alpha_0}(|f|^2)+H_{\beta_0}\left(\left|\mathcal{F}^{\boldsymbol\alpha,\boldsymbol\beta}_\mathbb{H}f\right|^{2}\right)\geq\frac{2}{\alpha_0-1}\log(2\alpha_0)+\frac{2}{\beta_0-1}\log(2\beta_0)+2\log\left(\left|\sin\left(\frac{\UVWX+\XWVU}{2}\right)\right|\left|\sin\left(\frac{\PQRS+\SRQP}{2}\right)\right|\right).
\end{align}
Thus the proof is complete.
\end{proof}
\begin{remark}
If $\alpha_0\rightarrow 1,$ then $\beta_0\rightarrow 1$ and thus \eqref{P7eqn27} can be written as
\begin{align}\label{P7eqn28}
&E(|f|^2)+E\left(\left|\left(\mathcal{F}^{\boldsymbol\alpha,\boldsymbol\beta}_\mathbb{H}f\right)(\bldxi)\right|^{2}\right)\geq 2\log\left(\left|\sin\left(\frac{\UVWX+\XWVU}{2}\right)\right|\left|\sin\left(\frac{\PQRS+\SRQP}{2}\right)\right|\right)+2(2-\log4)\notag,\\
&\hspace{-0.6cm}\mbox{i.e.,}~ E(|f|^2)+E\left(\left|\left(\mathcal{F}^{\boldsymbol\alpha,\boldsymbol\beta}_\mathbb{H}f\right)(\bldxi)\right|^{2}\right)\geq 2\log\left(\frac{e^2}{4}\left|\sin\left(\frac{\UVWX+\XWVU}{2}\right)\right|\left|\sin\left(\frac{\PQRS+\SRQP}{2}\right)\right|\right).
\end{align}
For QCFrFT, inequality \eqref{P7eqn28} is the Shannon entropy UP.
\end{remark}

\section{Short time quaternion coupled fractional Fourier transform}
The two sided short time quaternion coupled fractional Fourier transform (STQCFrFT) is defined and its properties are examined in this section .
\begin{definition}\label{P7Defn4.1}
Let $\boldsymbol\alpha=(\UVWX,\PQRS),\boldsymbol\beta=(\XWVU,\SRQP) \in\mathbb{R}^2$ such that $\UVWX+\XWVU,~\PQRS+\SRQP\notin 2\pi\mathbb{Z} $. The STQCFrFT of a function $f\in L^2_\mathbb{H}(\mathbb{R}^4)$ with respect to $g\in L^2_\mathbb{H}(\mathbb{R}^4)\cap L^\infty_\mathbb{H}(\mathbb{R}^4),$ called a quaternion window function (QWF), is defined by
\begin{align}\label{P7Definition_STQCFrFT}
\left(\mathcal{S}^{\boldsymbol\alpha,\boldsymbol\beta}_{\mathbb{H},g}f\right)(\bldx,\bldxi)=\int_{\mathbb{R}^4}\mathcal{K}^i_{\UVWX,\XWVU}(\bldt_1,\bldxi_1)f(\bldt)\overline{g(\bldt-\bldx)}\mathcal{K}^j_{\PQRS,\SRQP}(\bldt_2,\bldxi_2)d\bldt,~(\bldx,\bldxi)\in\mathbb{R}^4\times\mathbb{R}^4,
\end{align}
where $\mathcal{K}^i_{\UVWX,\XWVU}(\bldt_1,\bldxi_1)$ and $\mathcal{K}^j_{\PQRS,\SRQP}(\bldt_2,\bldxi_2)$ are given respectively by \eqref{P7eqn12} and \eqref{P7eqn13}.
\end{definition}
It is to be noted that if the kernels $\mathcal{K}^i_{\UVWX,\XWVU}(\bldt_1,\bldxi_1)$ and $\mathcal{K}^j_{\PQRS,\SRQP}(\bldt_2,\bldxi_2)$ both lies together to the left or to the right of $f(\bldt)\overline{g(\bldt-\bldx)}$ in the integral in \eqref{P7Definition_STQCFrFT}, then we have the left or right sided STQCFrFT. But our study mainly focus on the two sided STQCFrFT.

\begin{remark} For $\boldsymbol\alpha=\boldsymbol\beta,$ the kernels $\mathcal{K}^i_{\UVWX,\UVWX}(\bldt_1,\bldxi_1)$ and $\mathcal{K}^j_{\PQRS,\PQRS}(\bldt_2,\bldxi_2)$ are the tensor product of two one-dimensional FrFT. Thus, the  STQCFrFT reduces to the STQFrFT of the function $f\in L^2_{\mathbb{H}}(\mathbb{R}^4).$ Moreover, if $\boldsymbol\alpha=\boldsymbol\beta=(\frac{\pi}{2},\frac{\pi}{2}),$ the STQCFrFT reduces to the STQFT of the function $f\in L^2_{\mathbb{H}}(\mathbb{R}^4).$
\end{remark}
We mention below a lemma that will be used in proving some basic properties of the STQCFrFT.
\begin{lemma}\label{P7Lemma4.1}
Let $\boldsymbol k=(\boldsymbol k_1,\boldsymbol k_2),\bldxi=(\bldxi_1,\bldxi_2),\bldt=(\bldt_1,\bldt_2)\in\mathbb{R}^2\times\mathbb{R}^2.$ Then $\mathcal{K}^i_{\UVWX,\XWVU}(\bldt_1,\bldxi_1)$ and $\mathcal{K}^j_{\PQRS,\SRQP}(\bldt_2,\bldxi_2)$ satisfy the following
\begin{align}\label{P7eqn29}
\mathcal{K}^i_{\UVWX,\XWVU}(\bldt_1+\boldsymbol k_1,\bldxi_1)=e^{-i\left\{\tilde{a}(\PONM)\left(|\boldsymbol k_1|+2\bldt_1\cdot\boldsymbol k_1\right)-\boldsymbol k_1\cdot M_1\bldxi_1\right\}}\mathcal{K}^i_{\UVWX,\XWVU}(\bldt_1,\bldxi_1),
\end{align}
and 
\begin{align}\label{P7eqn31}
\mathcal{K}^j_{\PQRS,\SRQP}(\bldt_2+\boldsymbol k_2,\bldxi_2)=e^{-j\left\{\tilde{a}(\MNOP)\left(|\boldsymbol k_2|+2\bldt_2\cdot\boldsymbol k_2\right)-\boldsymbol k_2\cdot M_2\bldxi_2\right\}}\mathcal{K}^j_{\PQRS,\SRQP}(\bldt_2,\bldxi_2).
\end{align}
\end{lemma}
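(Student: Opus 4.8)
The plan is to substitute the shifted argument directly into the definition \eqref{P7eqn12} of the kernel and then isolate the terms produced by the shift. First I would write
\begin{align*}
\mathcal{K}^i_{\UVWX,\XWVU}(\bldt_1+\boldsymbol k_1,\bldxi_1)=\tilde{d}(\PONM)e^{-i\left\{\tilde{a}(\PONM)(|\bldt_1+\boldsymbol k_1|^2+|\bldxi_1|^2)-(\bldt_1+\boldsymbol k_1)\cdot M_1\bldxi_1\right\}},
\end{align*}
and then expand $|\bldt_1+\boldsymbol k_1|^2=|\bldt_1|^2+2\bldt_1\cdot\boldsymbol k_1+|\boldsymbol k_1|^2$ together with the linear decomposition $(\bldt_1+\boldsymbol k_1)\cdot M_1\bldxi_1=\bldt_1\cdot M_1\bldxi_1+\boldsymbol k_1\cdot M_1\bldxi_1$. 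This rewrites the exponent as the exponent defining $\mathcal{K}^i_{\UVWX,\XWVU}(\bldt_1,\bldxi_1)$ plus the extra term $-i\left\{\tilde{a}(\PONM)(|\boldsymbol k_1|^2+2\bldt_1\cdot\boldsymbol k_1)-\boldsymbol k_1\cdot M_1\bldxi_1\right\}$.

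The only point requiring care is that $\mathbb{H}$ is noncommutative, so in general an exponential of a sum need not factor into a product, nor may factors be freely reordered. The key observation is that every quantity appearing in $\mathcal{K}^i_{\UVWX,\XWVU}$ — including the prefactor $\tilde{d}(\PONM)$, which carries the unit $i$ — is a real scalar times a power of the single imaginary unit $i$. Hence all these factors lie in the commutative subalgebra $\mathbb{R}\oplus i\mathbb{R}\cong\mathbb{C}$, on which $e^{-i(s+s')}=e^{-is}e^{-is'}$ holds for real $s,s'$ and all factors commute. Splitting the exponential accordingly and moving the extra factor to the front (legitimate, since it commutes with $\tilde{d}(\PONM)$) gives
\begin{align*}
\mathcal{K}^i_{\UVWX,\XWVU}(\bldt_1+\boldsymbol k_1,\bldxi_1)=e^{-i\left\{\tilde{a}(\PONM)(|\boldsymbol k_1|^2+2\bldt_1\cdot\boldsymbol k_1)-\boldsymbol k_1\cdot M_1\bldxi_1\right\}}\mathcal{K}^i_{\UVWX,\XWVU}(\bldt_1,\bldxi_1),
\end{align*}
which is precisely \eqref{P7eqn29} (where the displayed $|\boldsymbol k_1|$ should read $|\boldsymbol k_1|^2$).

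Finally I would repeat the argument verbatim for $\mathcal{K}^j_{\PQRS,\SRQP}$ starting from \eqref{P7eqn13}; the sole change is that the relevant exponents and prefactor now lie in $\mathbb{R}\oplus j\mathbb{R}$, which is again a commutative copy of $\mathbb{C}$, so the same splitting rule applies and yields \eqref{P7eqn31}. I expect no genuine obstacle: the entire content is the elementary expansion of $|\bldt_1+\boldsymbol k_1|^2$ plus the remark that each kernel takes values in a commutative subalgebra generated by a single imaginary unit, so the noncommutativity of $\mathbb{H}$ never actually intervenes. The closest thing to a pitfall is simply remembering to justify the reordering of the extracted factor past $\tilde{d}$, which the commutative-subalgebra observation settles at once.
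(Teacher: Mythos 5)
Your proposal is correct and follows essentially the same route as the paper's own proof: substitute the shifted argument into \eqref{P7eqn12}, expand $|\bldt_1+\boldsymbol k_1|^2$ and the bilinear term, split the exponential, and factor the shift-dependent phase out in front. The only differences are cosmetic: you explicitly justify the splitting and reordering via the commutative subalgebra $\mathbb{R}\oplus i\mathbb{R}$ (the paper leaves this implicit), and you correctly flag that the $|\boldsymbol k_1|$ in \eqref{P7eqn29} is a typo for $|\boldsymbol k_1|^2$.
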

\begin{proof}
From the definition of $\mathcal{K}^i_{\UVWX,\XWVU},$ we have
\begin{align*}
\mathcal{K}^i_{\UVWX,\XWVU}(\bldt_1+\boldsymbol k_1,\bldxi_1)
&=\tilde{d}(\PONM)e^{-i\left\{\tilde{a}(\PONM)\left(|\bldt_1+\boldsymbol k_1|^2+|\bldxi_1|^2\right)-(\bldt_1+\boldsymbol k_1)\cdot M_1\bldxi_1\right\}}\\
&=\tilde{d}(\PONM)e^{-i\left\{\tilde{a}(\PONM)\left(|\bldt_1|^2+|\boldsymbol k_1|^2+2\bldt_1\cdot\boldsymbol k_1+|\bldxi_1|^2\right)-\bldt_1\cdot M_1\bldxi_1-\boldsymbol k_1\cdot M_1\bldxi_1\right\}}\\
&=\tilde{d}(\PONM)e^{-i\left\{\tilde{a}(\PONM)\left(|\bldt_1|^2+|\bldxi_1|^2\right)-\bldt_1\cdot M_1\bldxi_1\right\}}e^{-i\left\{\tilde{a}(\PONM)\left(|\boldsymbol k_1|^2+2\bldt_1\cdot\bldxi_1\right)-\boldsymbol k_1\cdot M_1\bldxi_1\right\}},\\
\mbox{i.e.,}~\mathcal{K}^i_{\UVWX,\XWVU}(\bldt_1+\boldsymbol k_1,\bldxi_1)&=e^{-i\left\{\tilde{a}(\PONM)\left(|\boldsymbol k_1|+2\bldt_1\cdot\boldsymbol k_1\right)-\boldsymbol k_1\cdot M_1\bldxi_1\right\}}\mathcal{K}^i_{\UVWX,\XWVU}(\bldt_1,\bldxi_1).
\end{align*}
This proves \eqref{P7eqn29}. \eqref{P7eqn31} follows similarly.
\end{proof}
The proposed STQCFrFT enjoys the following basic properties.
\begin{theorem}\label{P7Theo4.2}
Let $f_1,f_2,f \in L^2_\mathbb{H}(\mathbb{R}^4)$ and $g_1,g,g_2\in L^\infty_\mathbb{H}(\mathbb{R}^4)\cap L^2_\mathbb{H}(\mathbb{R}^4)$ be QWFs. Then
\begin{enumerate}[label=(\roman*)]
\item{Boundedness}: $\left\|\mathcal{S}^{\boldsymbol\alpha,\boldsymbol\beta}_{\mathbb{H},g}f\right\|_{L^\infty_\mathbb{H}(\mathbb{R}^4)}\leq\frac{1}{4\pi^2\left|\sin\left(\frac{\UVWX+\XWVU}{2}\right)\right| \left|\sin\left(\frac{\PQRS+\SRQP}{2}\right)\right|}\|g\|_{L^2_\mathbb{H}(\mathbb{R}^4)}\|f\|_{L^2_\mathbb{H}(\mathbb{R}^4)}.$ 
\item {Linearity}:
$\mathcal{S}^{\boldsymbol\alpha,\boldsymbol\beta}_{\mathbb{H},g}(pf_1+qf_2)=p\left[\mathcal{S}^{\boldsymbol\alpha,\boldsymbol\beta}_{\mathbb{H},g}f_1\right]+q\left[\mathcal{S}^{\boldsymbol\alpha,\boldsymbol\beta}_{\mathbb{H},g}f_2\right],~p,q\in\{\xx+i\yy:\xx,\yy\in\mathbb{R}\}$
\item {Anti-linearity}:
$\mathcal{S}^{\boldsymbol\alpha,\boldsymbol\beta}_{\mathbb{H},rg_1+sg_2}f=\left[\mathcal{S}^{\boldsymbol\alpha,\boldsymbol\beta}_{\mathbb{H},g_1}f\right]\bar{r}+\left[\mathcal{S}^{\boldsymbol\alpha,\boldsymbol\beta}_{\mathbb{H},g_2}f\right]\bar{s},~r,s\in\{\xx+j\yy:\xx,\yy\in\mathbb{R}\}.$
\item{Translation}: $\left(\mathcal{S}^{\boldsymbol\alpha,\boldsymbol\beta}_{\mathbb{H},g}(\tau_{\boldsymbol l}f)\right)(\bldx,\bldxi)=e^{-i\left\{\tilde{a}(\PONM)\left(|\boldsymbol l_1|+2\bldt_1\cdot\boldsymbol l_1\right)-\boldsymbol l_1\cdot M_1\bldxi_1\right\}}\left(\mathcal{S}^{\boldsymbol\alpha,\boldsymbol\beta}_{\mathbb{H},g}f\right)(\bldx-\boldsymbol l,\bldxi)e^{-j\left\{\tilde{a}(\MNOP)\left(|\boldsymbol l_2|+2\bldt_2\cdot\boldsymbol l_2\right)-\boldsymbol l_2\cdot M_2\bldxi_2\right\}},$ where 
$(\tau_{\boldsymbol l}f)(\bldt)=f(\bldt-\boldsymbol l),~\boldsymbol l=(\boldsymbol l_1,\boldsymbol l_2)\in\mathbb{R}^2\times\mathbb{R}^2.$ 
 
\item {Parity}: $\left(\mathcal{S}^{\boldsymbol\alpha,\boldsymbol\beta}_{\mathbb{H},Pg}Pf\right)(\bldx,\bldxi)=\left(\mathcal{S}^{\boldsymbol\alpha,\boldsymbol\beta}_{\mathbb{H},g}f\right)\left(-\bldx,-\bldxi\right),$ where $(Pf)(\bldt)=f\left(-\bldt\right).$
\end{enumerate}
\end{theorem}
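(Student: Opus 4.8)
The plan is to verify all five items by direct manipulation of the defining integral \eqref{P7Definition_STQCFrFT}, relying on two structural features of the setting. First, the modulus on $\mathbb{H}$ is multiplicative, and because the exponents in \eqref{P7eqn12} and \eqref{P7eqn13} are purely imaginary, both kernels have constant modulus: $\left|\mathcal{K}^i_{\UVWX,\XWVU}(\bldt_1,\bldxi_1)\right|=|\tilde d(\PONM)|=\frac{1}{2\pi|\sin\PONM|}$ and $\left|\mathcal{K}^j_{\PQRS,\SRQP}(\bldt_2,\bldxi_2)\right|=|\tilde d(\MNOP)|=\frac{1}{2\pi|\sin\MNOP|}$. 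Second, $\mathcal{K}^i_{\UVWX,\XWVU}$ takes values in the commutative subfield spanned by $1$ and $i$, while $\mathcal{K}^j_{\PQRS,\SRQP}$ takes values in the one spanned by $1$ and $j$; keeping track of which scalars commute with which kernel is the whole game.

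For boundedness (i) I would move the modulus inside the integral via $|rs|=|r||s|$, pull out the two constant kernel moduli $|\tilde d(\PONM)||\tilde d(\MNOP)|=\frac{1}{4\pi^2|\sin\PONM||\sin\MNOP|}$, and bound the remaining $\int_{\mathbb{R}^4}|f(\bldt)|\,|g(\bldt-\bldx)|\,d\bldt$ by Cauchy--Schwarz, using translation invariance of the $L^2_\mathbb{H}(\mathbb{R}^4)$ norm of $g$. Substituting $\PONM=\tfrac{\UVWX+\XWVU}{2}$ and $\MNOP=\tfrac{\PQRS+\SRQP}{2}$ reproduces the stated constant. Items (ii) and (iii) hinge entirely on placement: since $p,q$ lie in the subfield generated by $i$, they commute with $\mathcal{K}^i_{\UVWX,\XWVU}$ and may be extracted to the \emph{left} of the integral, giving linearity in $f$; while for the window one writes $\overline{rg_1+sg_2}=\overline{g_1}\,\bar r+\overline{g_2}\,\bar s$, and since $\bar r,\bar s$ lie in the subfield generated by $j$ they commute with $\mathcal{K}^j_{\PQRS,\SRQP}$ and may be extracted to the \emph{right}, producing the anti-linearity with $\bar r,\bar s$ on the far right.

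For translation (iv) the substitution $\bldt\mapsto\bldt+\boldsymbol l$ turns the window argument into $\bldt-(\bldx-\boldsymbol l)$ and the kernel arguments into $\bldt_1+\boldsymbol l_1$ and $\bldt_2+\boldsymbol l_2$; Lemma \ref{P7Lemma4.1} then splits each shifted kernel into a phase factor times the unshifted kernel, and the $i$-valued phase commutes past $\mathcal{K}^i_{\UVWX,\XWVU}$ to sit on the left while the $j$-valued phase commutes past $\mathcal{K}^j_{\PQRS,\SRQP}$ to sit on the right of the integrand, recasting the transform of the translate through the two phase factors and the transform $\left(\mathcal{S}^{\boldsymbol\alpha,\boldsymbol\beta}_{\mathbb{H},g}f\right)$ at the shifted window position $\bldx-\boldsymbol l$. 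For parity (v) the substitution $\bldt\mapsto-\bldt$ together with the elementary identities $\mathcal{K}^i_{\UVWX,\XWVU}(-\bldt_1,\bldxi_1)=\mathcal{K}^i_{\UVWX,\XWVU}(\bldt_1,-\bldxi_1)$ and $\mathcal{K}^j_{\PQRS,\SRQP}(-\bldt_2,\bldxi_2)=\mathcal{K}^j_{\PQRS,\SRQP}(\bldt_2,-\bldxi_2)$ --- immediate from \eqref{P7eqn12}--\eqref{P7eqn13}, the quadratic terms being even and the bilinear term $\bldt\cdot M\bldxi$ odd in each slot --- sends $(\bldx,\bldxi)\mapsto(-\bldx,-\bldxi)$ and yields the claim.

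The only genuine obstacle throughout is the non-commutativity of $\mathbb{H}$: every step is routine provided one never commutes a scalar past a kernel lying in a different subfield. The most delicate case is (iv), where a left $i$-phase and a right $j$-phase appear at once, and one must confirm that Lemma \ref{P7Lemma4.1} deposits each factor on precisely the side where it commutes with the adjacent kernel; (ii) and (iii) are really the same observation restricted to constants, which is why the scalars are drawn from the $i$- and $j$-subfields respectively rather than from all of $\mathbb{H}$.
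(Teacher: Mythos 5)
Your proposal is correct and follows essentially the same route as the paper: the paper dismisses (i)--(iii) as straightforward (your sketches are exactly the intended arguments -- constant kernel modulus plus Cauchy--Schwarz, and commutation of $i$-subfield scalars past $\mathcal{K}^i_{\UVWX,\XWVU}$ on the left and of $j$-subfield scalars past $\mathcal{K}^j_{\PQRS,\SRQP}$ on the right), proves (iv) by the same substitution combined with Lemma \ref{P7Lemma4.1}, and proves (v) via the same kernel identities $\mathcal{K}^i_{\UVWX,\XWVU}(-\bldt_1,\bldxi_1)=\mathcal{K}^i_{\UVWX,\XWVU}(\bldt_1,-\bldxi_1)$ and $\mathcal{K}^j_{\PQRS,\SRQP}(-\bldt_2,\bldxi_2)=\mathcal{K}^j_{\PQRS,\SRQP}(\bldt_2,-\bldxi_2)$. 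One caveat you share with the paper: in (iv) the phase factors produced by Lemma \ref{P7Lemma4.1} depend on the integration variable $\bldt$, so pulling them outside the integral is not literally justified -- but this blemish is inherited from the statement itself, whose right-hand side already contains $\bldt_1,\bldt_2$ free.
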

\begin{proof}
We skip the proof of $(i),$ $(ii)$ and $(iii)$ as they are straight forward.

$(iv)$ From definition \ref{P7Defn4.1}, we have
\begin{align*}
\left(\mathcal{S}^{\boldsymbol\alpha,\boldsymbol\beta}_{\mathbb{H},g}(\tau_{\boldsymbol l}f)\right)(\bldx,\bldxi)=\int_{\mathbb{R}^4}\mathcal{K}^i_{\UVWX,\XWVU}(\bldt_1+\boldsymbol l_1,\bldxi_1)f(\bldt)\overline{g(\bldt-(\bldx-\boldsymbol l))}\mathcal{K}^j_{\PQRS,\SRQP}(\bldt_2+\boldsymbol l_2,\bldxi_2)d\bldt.
\end{align*}
By lemma \eqref{P7Lemma4.1}, we see that
\begin{align*}
&\left(\mathcal{S}^{\boldsymbol\alpha,\boldsymbol\beta}_{\mathbb{H},g}(\tau_{\boldsymbol l}f)\right)(\bldx,\bldxi)\\
&=e^{-i\left\{\tilde{a}(\PONM)\left(|\boldsymbol l_1|+2\bldt_1\cdot\boldsymbol l_1\right)-\boldsymbol l_1\cdot M_1\bldxi_1\right\}}\left\{\int_{\mathbb{R}^4}\mathcal{K}^i_{\UVWX,\XWVU}(\bldt_1,\bldxi_1)f(\bldt)\overline{g(\bldt-(\bldx-\boldsymbol l))}\mathcal{K}^j_{\UVWX,\XWVU}(\bldt_2,\bldxi_2)d\bldt\right\}e^{-j\left\{\tilde{a}(\MNOP)\left(|\boldsymbol l_2|+2\bldt_2\cdot\boldsymbol l_2\right)-\boldsymbol l_2\cdot M_2\bldxi_2\right\}}.
\end{align*}
Thus, we have
\begin{align*}
\left(\mathcal{S}^{\boldsymbol\alpha,\boldsymbol\beta}_{\mathbb{H},g}(\tau_{\boldsymbol l}f)\right)(\bldx,\bldxi)=e^{-i\left\{\tilde{a}(\PONM)\left(|\boldsymbol l_1|+2\bldt_1\cdot\boldsymbol l_1\right)-\boldsymbol l_1\cdot M_1\bldxi_1\right\}}\left(\mathcal{S}^{\boldsymbol\alpha,\boldsymbol\beta}_{\mathbb{H},g}f\right)(\bldx-\boldsymbol l,\bldxi)e^{-j\left\{\tilde{a}(\MNOP)\left(|\boldsymbol l_2|+2\bldt_2\cdot\boldsymbol l_2\right)-\boldsymbol l_2\cdot M_2\bldxi_2\right\}}.
\end{align*}
This proves $(iii).$

$(v)$ Using the definition of STQCFrFT, we have
\begin{align}\label{P7eqn33}
\left(\mathcal{S}^{\wedge_1,\wedge_2}_{\mathbb{H},Pg}Pf\right)(\bldx,\bldxi)
&=\int_{\mathbb{R}^4}\mathcal{K}^i_{\UVWX,\XWVU}(-\bldt_1,\bldxi_1)f(\bldt)\overline{g\left(\bldt-\bldx\right)}\mathcal{K}^j_{\PQRS,\SRQP}(-\bldt_2,\bldxi_2)d\bldt.
\end{align}
Now, it can be shown that 
\begin{align}\label{P7eqn34}
\mathcal{K}^i_{\UVWX,\XWVU}(-\bldt_1,\bldxi_1)=\mathcal{K}^i_{\UVWX,\XWVU}(\bldt_1,-\bldxi_1)
\end{align}
and
\begin{align}\label{P7eqn35}
\mathcal{K}^j_{\PQRS,\SRQP}(-\bldt_2,\bldxi_2)=\mathcal{K}^j_{\PQRS,\SRQP}(\bldt_2,-\bldxi_2).
\end{align}
By virtue of  \eqref{P7eqn34}, \eqref{P7eqn35} and \eqref{P7eqn33}, we have
\begin{align*}
\left(\mathcal{S}^{\boldsymbol\alpha,\boldsymbol\beta}_{\mathbb{H},Pg}Pf\right)(\bldx,\bldxi)=\left(\mathcal{S}^{\boldsymbol\alpha,\boldsymbol\beta}_{\mathbb{H},g}f\right)\left(-\bldx,-\bldxi\right).
\end{align*}
Thus the proof is complete.
\end{proof}
\begin{theorem}\label{P7Theo4.3}
(Inner product relation): If $f,h\in L^2_\mathbb{H}(\mathbb{R}^4)$ and $g_1,g_2$ are two QWFs, then $\mathcal{S}^{\boldsymbol\alpha,\boldsymbol\beta}_{\mathbb{H},g_1}f,~\mathcal{S}^{\boldsymbol\alpha,\boldsymbol\beta}_{\mathbb{H},g_2}f\in L^2_\mathbb{H}(\mathbb{R}^4\times\mathbb{R}^4)$ and
\begin{align}\label{P7eqn36}
\left\langle\mathcal{S}^{\boldsymbol\alpha,\boldsymbol\beta}_{\mathbb{H},g_1}f,\mathcal{S}^{\boldsymbol\alpha,\boldsymbol\beta}_{\mathbb{H},g_2}h\right\rangle=\langle f(\overline{g_1},\overline{g_2}),h\rangle.
\end{align}
\end{theorem}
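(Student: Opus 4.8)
The plan is to recognize that, for each fixed position $\bldx$, the STQCFrFT in \eqref{P7Definition_STQCFrFT} is nothing but the QCFrFT of the windowed function $\bldt\mapsto f(\bldt)\overline{g(\bldt-\bldx)}$ evaluated at $\bldxi$; this is immediate on comparing Definitions \ref{P7Defn4.1} and \ref{P7Defn3.1}, since the two kernels $\mathcal{K}^i_{\UVWX,\XWVU}$ and $\mathcal{K}^j_{\PQRS,\SRQP}$ sit in exactly the same left/right positions. I would record this observation first, writing $\left(\mathcal{S}^{\boldsymbol\alpha,\boldsymbol\beta}_{\mathbb{H},g}f\right)(\bldx,\cdot)=\mathcal{F}^{\boldsymbol\alpha,\boldsymbol\beta}_{\mathbb{H}}\big[f(\cdot)\overline{g(\cdot-\bldx)}\big]$, so that everything reduces to the Parseval relation for the QCFrFT established in Theorem \ref{P7Theo3.2}.

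For the membership claim, I would fix $\bldx$; since $g\in L^\infty_\mathbb{H}(\mathbb{R}^4)$ and $f\in L^2_\mathbb{H}(\mathbb{R}^4)$, the function $f(\cdot)\overline{g(\cdot-\bldx)}$ lies in $L^2_\mathbb{H}(\mathbb{R}^4)$, so \eqref{P7eqn20} gives $\int_{\mathbb{R}^4}\big|\big(\mathcal{S}^{\boldsymbol\alpha,\boldsymbol\beta}_{\mathbb{H},g}f\big)(\bldx,\bldxi)\big|^2\,d\bldxi=\int_{\mathbb{R}^4}|f(\bldt)|^2|g(\bldt-\bldx)|^2\,d\bldt$. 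Integrating over $\bldx$ and applying Fubini's theorem yields $\|g\|^2_{L^2_\mathbb{H}(\mathbb{R}^4)}\|f\|^2_{L^2_\mathbb{H}(\mathbb{R}^4)}<\infty$, which proves $\mathcal{S}^{\boldsymbol\alpha,\boldsymbol\beta}_{\mathbb{H},g_i}f\in L^2_\mathbb{H}(\mathbb{R}^4\times\mathbb{R}^4)$ and justifies all the interchanges of integration used below.

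For the identity itself, I would write the left-hand side as $Sc$ of the double integral over $(\bldx,\bldxi)$ and, for each fixed $\bldx$, apply the Parseval identity \eqref{P7eqn19} in the $\bldxi$-variable to strip off both transforms. This collapses the $\bldxi$-integral and leaves, after simplifying the quaternion conjugate via $\overline{h(\bldt)\overline{g_2(\bldt-\bldx)}}=g_2(\bldt-\bldx)\overline{h(\bldt)}$, the expression $Sc\int_{\mathbb{R}^4}\int_{\mathbb{R}^4}f(\bldt)\,\overline{g_1(\bldt-\bldx)}g_2(\bldt-\bldx)\,\overline{h(\bldt)}\,d\bldt\,d\bldx$. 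The main obstacle is that the window factor $\overline{g_1(\bldt-\bldx)}g_2(\bldt-\bldx)$ is a genuinely quaternion-valued quantity wedged between $f(\bldt)$ and $\overline{h(\bldt)}$, so the non-commutativity of $\mathbb{H}$ forbids pulling it out of the $\bldt$-integral directly.

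The device that resolves this is the cyclic symmetry of the scalar part, $Sc(qrs)=Sc(rsq)$, recorded in the preliminaries. Applying it, I move the window factor to the front, integrate in $\bldx$ by the substitution $\bldy=\bldt-\bldx$ to obtain the constant quaternion $\int_{\mathbb{R}^4}\overline{g_1(\bldy)}g_2(\bldy)\,d\bldy=(\overline{g_1},\overline{g_2})$, which is independent of $\bldt$, and then invoke cyclic symmetry once more to restore the order $f(\bldt)(\overline{g_1},\overline{g_2})\overline{h(\bldt)}$. This gives $Sc\int_{\mathbb{R}^4}f(\bldt)(\overline{g_1},\overline{g_2})\overline{h(\bldt)}\,d\bldt=\langle f(\overline{g_1},\overline{g_2}),h\rangle$, which is exactly \eqref{P7eqn36}. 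I expect the only remaining points needing care to be the correct handling of the conjugate of a product and keeping straight which factors are scalar-part-movable versus genuinely non-commuting.
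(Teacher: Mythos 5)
Your proposal is correct and takes essentially the same route as the paper's proof: identify $\left(\mathcal{S}^{\boldsymbol\alpha,\boldsymbol\beta}_{\mathbb{H},g}f\right)(\bldx,\cdot)$ as the QCFrFT of $f(\cdot)\overline{g(\cdot-\bldx)}$, invoke the QCFrFT Parseval identity (Theorem \ref{P7Theo3.2}) both for the $L^2$-membership and for collapsing the $\bldxi$-integral, then use Fubini and the substitution $\bldy=\bldt-\bldx$ to produce the constant quaternion $(\overline{g_1},\overline{g_2})$. The only cosmetic difference is your final bookkeeping: the paper never needs the cyclic symmetry of $Sc$, since after interchanging the order of integration it simply pulls $f(\bldt)$ and $\overline{h(\bldt)}$ out of the inner $\bldx$-integral as left and right constants, leaving the window product $\overline{g_1(\bldt-\bldx)}\,g_2(\bldt-\bldx)$ inside — so the "obstacle" you flag is not actually an obstruction, though your cyclicity argument resolves it equally well.
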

\begin{proof}
We have
\begin{align*}
\int_{\mathbb{R}^4}\int_{\mathbb{R}^4}\left|\left(\mathcal{S}^{\boldsymbol\alpha,\boldsymbol\beta}_{\mathbb{H},g_1}f\right)(\bldx,\bldxi)\right|^2d\bldx d\bldxi
&=\int_{\mathbb{R}^4}\left\{\int_{\mathbb{R}^4}\left|\left(\mathcal{F}^{\boldsymbol\alpha,\boldsymbol\beta}_{\mathbb{H}}\{f(\cdot)\overline{g_1(\cdot-\bldx)}\}\right)(\bldxi)\right|^2d\bldxi\right\}d\bldx\\
&=\int_{\mathbb{R}^4}\left\{\int_{\mathbb{R}^4}|f(\bldt)\overline{g_1(\bldt-\bldx)}|^2d\bldt\right\}d\bldx,~\mbox{using Parseval's Identity}\\
&=\|f\|^2_{L^2_\mathbb{H}(\mathbb{R}^4)}\|g_1\|^2_{L^2_\mathbb{H}(\mathbb{R}^4)}.
\end{align*}
Thus, $\mathcal{S}^{\boldsymbol\alpha,\boldsymbol\beta}_{\mathbb{H},g_1}f\in L^2_\mathbb{H}(\mathbb{R}^4\times\mathbb{R}^4).$ Similarly, $\mathcal{S}^{\boldsymbol\alpha,\boldsymbol\beta}_{\mathbb{H},g_2}h\in L^2_\mathbb{H}(\mathbb{R}^4\times\mathbb{R}^4).$

Now,
\begin{align*}
\left\langle\mathcal{S}^{\boldsymbol\alpha,\boldsymbol\beta}_{\mathbb{H},g_1}f,\mathcal{S}^{\boldsymbol\alpha,\boldsymbol\beta}_{\mathbb{H},g_2}h\right\rangle
&=Sc\int_{\mathbb{R}^4}\int_{\mathbb{R}^4}\left(\mathcal{F}^{\boldsymbol\alpha,\boldsymbol\beta}_{\mathbb{H}}\{f(\cdot)\overline{g_1(\cdot-\bldx)}\}\right)(\bldxi)\overline{\left(\mathcal{F}^{\boldsymbol\alpha,\boldsymbol\beta}_{\mathbb{H}}\{h(\cdot)\overline{g_2(\cdot-\bldx)}\}\right)(\bldxi)}d\bldx d\bldxi\\
&=Sc\int_{\mathbb{R}^4}\left\{\int_{\mathbb{R}^4}f(\bldt)\overline{g_1(\bldt-\bldx)}~\overline{h(\bldt)\overline{g_{2}(\bldt-\bldx)}}d\bldt\right\}d\bldx\\
&=Sc\int_{\mathbb{R}^4}f(\bldt)
\left(\int_{\mathbb{R}^4}\overline{g_1(\bldt-\bldx)}g_2(\bldt-\bldx)d\bldx\right)
\overline{h(\bldt)}d\bldt\\
&=Sc\int_{\mathbb{R}^4}f(\bldt)\left(\overline{g_1},\overline{g_2}\right)\overline{h(\bldt)}d\bldt.
\end{align*}
This implies
\begin{align*}
\left\langle\mathcal{S}^{\boldsymbol\alpha,\boldsymbol\beta}_{\mathbb{H},g_1}f,\mathcal{S}^{\boldsymbol\alpha,\boldsymbol\beta}_{\mathbb{H},g_2}h\right\rangle=\langle f\left(\overline{g_1},\overline{g_2}\right),h\rangle.
\end{align*}
This finishes that proof.
\end{proof}
\begin{remark}
In view of \eqref{P7eqn36}, we can conclude that
\begin{enumerate}
\item If $g_1=g=g_2$ in \eqref{P7eqn36}, then 
\begin{align*}
\left\langle\mathcal{S}^{\boldsymbol\alpha,\boldsymbol\beta}_{\mathbb{H},g}f,\mathcal{S}^{\boldsymbol\alpha,\boldsymbol\beta}_{\mathbb{H},g}h\right\rangle=\langle f,h\rangle\|g\|^2_{L^2_\mathbb{H}(\mathbb{R}^4)}.
\end{align*}
\item If $f=h$ in \eqref{P7eqn36}, then
\begin{align*}
\left\langle\mathcal{S}^{\boldsymbol\alpha,\boldsymbol\beta}_{\mathbb{H},g_1}f,\mathcal{S}^{\boldsymbol\alpha,\boldsymbol\beta}_{\mathbb{H},g_2}f\right\rangle=\langle g_1,g_2\rangle\|f\|^2_{L^2_\mathbb{H}(\mathbb{R}^4)}.
\end{align*}
\item If  $g=g_1=g_2$ and $f=h$ in \eqref{P7eqn36}, then 
\begin{align}\label{P7eqn37}
\|\mathcal{S}^{\boldsymbol\alpha,\boldsymbol\beta}_{\mathbb{H},g}f\|^2_{L^2_\mathbb{H}(\mathbb{R}^4\times\mathbb{R}^4)}=\|g\|^2_{L^2_\mathbb{H}(\mathbb{R}^4)}\|f\|^2_{L^2_\mathbb{H}(\mathbb{R}^4)}.
\end{align}
\end{enumerate} 
\end{remark}
The following theorem gives the reconstruction formula for the STQCFrFT.
\begin{theorem}\label{P7Theo4.4}
If $g$ be a QWF and $f\in L^2_\mathbb{H}(\mathbb{R}^4)$, then
$$f(\bldt)=\int_{\mathbb{R}^4}\int_{\mathbb{R}^4}\overline{\mathcal{K}^i_{\UVWX,\XWVU}(\bldt_1,\bldxi_1)}\left(\mathcal{S}^{\boldsymbol\alpha,\boldsymbol\beta}_{\mathbb{H},g}f\right)(\bldx,\bldxi)\overline{\mathcal{K}^j_{\PQRS,\SRQP}(\bldt_2,\bldxi_2)}g(\bldt-\bldx)d\bldx d\bldxi.$$
\end{theorem}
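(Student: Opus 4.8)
The plan is to exploit the observation, already implicit in the proof of Theorem \ref{P7Theo4.3}, that for each fixed window position $\bldx$ the STQCFrFT is nothing but the QCFrFT of the windowed signal; that is,
\begin{align*}
\left(\mathcal{S}^{\boldsymbol\alpha,\boldsymbol\beta}_{\mathbb{H},g}f\right)(\bldx,\bldxi)=\left(\mathcal{F}^{\boldsymbol\alpha,\boldsymbol\beta}_{\mathbb{H}}\{f(\cdot)\overline{g(\cdot-\bldx)}\}\right)(\bldxi).
\end{align*}
Since $g\in L^\infty_\mathbb{H}(\mathbb{R}^4)$, the product $f(\cdot)\overline{g(\cdot-\bldx)}$ lies in $L^2_\mathbb{H}(\mathbb{R}^4)$ for almost every $\bldx$, so I can apply the QCFrFT inversion formula \eqref{P7eqn14} in the $\bldxi$ variable with $\bldx$ held fixed. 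This recovers the windowed signal:
\begin{align*}
f(\bldt)\overline{g(\bldt-\bldx)}=\int_{\mathbb{R}^4}\overline{\mathcal{K}^i_{\UVWX,\XWVU}(\bldt_1,\bldxi_1)}\left(\mathcal{S}^{\boldsymbol\alpha,\boldsymbol\beta}_{\mathbb{H},g}f\right)(\bldx,\bldxi)\overline{\mathcal{K}^j_{\PQRS,\SRQP}(\bldt_2,\bldxi_2)}d\bldxi.
\end{align*}

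The next step is to remove the window factor on the left. I right-multiply both sides by $g(\bldt-\bldx)$ and use the crucial fact that $\overline{g(\bldt-\bldx)}\,g(\bldt-\bldx)=|g(\bldt-\bldx)|^2$ is a real scalar, so the left-hand side collapses to $f(\bldt)\,|g(\bldt-\bldx)|^2$. Integrating in $\bldx$ over $\mathbb{R}^4$ and invoking Fubini's theorem to interchange the $\bldx$ and $\bldxi$ integrations (justified by the square-integrability established in Theorem \ref{P7Theo4.3}), the left-hand side becomes $f(\bldt)\int_{\mathbb{R}^4}|g(\bldt-\bldx)|^2\,d\bldx$. The substitution $\bldy=\bldt-\bldx$ turns this integral into $\|g\|^2_{L^2_\mathbb{H}(\mathbb{R}^4)}$, which under the standard normalization $\|g\|_{L^2_\mathbb{H}(\mathbb{R}^4)}=1$ equals $1$, yielding exactly the claimed reconstruction formula.

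The only genuinely delicate point is the quaternionic non-commutativity. Because $\overline{\mathcal{K}^j_{\PQRS,\SRQP}}$ carries a $j$-imaginary part while $g(\bldt-\bldx)$ is a full quaternion, these two factors do not commute, so the multiplication by $g(\bldt-\bldx)$ must be kept strictly to the right of $\overline{\mathcal{K}^j_{\PQRS,\SRQP}}$ throughout the argument; this is precisely the ordering displayed in the statement. What makes the scheme succeed is that the collapse $\overline{g}\,g=|g|^2$ produces a \emph{real} scalar, which commutes with $f(\bldt)$ and can therefore be pulled out of the $\bldx$-integral without disturbing any quaternion order. I expect the two points most in need of explicit mention to be the Fubini justification for swapping the iterated integrals and the implicit normalization $\|g\|_{L^2_\mathbb{H}(\mathbb{R}^4)}=1$ (in its absence the formula carries an extra factor $\|g\|^{-2}_{L^2_\mathbb{H}(\mathbb{R}^4)}$, consistent with \eqref{P7eqn37}).
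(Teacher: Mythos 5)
Your proof is correct, and it takes a genuinely different route from the paper's. The paper argues by duality: for an arbitrary $h\in L^2_\mathbb{H}(\mathbb{R}^4)$ it uses the inner product relation of Theorem \ref{P7Theo4.3} (with $g_1=g_2=g$) to write $\langle f,h\rangle=\langle \mathcal{S}^{\boldsymbol\alpha,\boldsymbol\beta}_{\mathbb{H},g}f,\mathcal{S}^{\boldsymbol\alpha,\boldsymbol\beta}_{\mathbb{H},g}h\rangle$, expands $\mathcal{S}^{\boldsymbol\alpha,\boldsymbol\beta}_{\mathbb{H},g}h$ by its definition, rearranges inside the scalar part using its cyclic symmetry, and identifies the result as $\langle F,h\rangle$ where $F$ is the claimed integral; since $h$ is arbitrary, $f=F$. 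You instead freeze $\bldx$, view $(\mathcal{S}^{\boldsymbol\alpha,\boldsymbol\beta}_{\mathbb{H},g}f)(\bldx,\cdot)$ as the QCFrFT of the windowed signal, apply the inversion formula \eqref{P7eqn14}, right-multiply by $g(\bldt-\bldx)$ (correctly kept to the right of $\overline{\mathcal{K}^j_{\PQRS,\SRQP}(\bldt_2,\bldxi_2)}$, with the collapse $\overline{g}g=|g|^2$ producing a real scalar), and integrate in $\bldx$. Your route is more elementary, needing only \eqref{P7eqn14} rather than Theorem \ref{P7Theo4.3}; the paper's weak formulation avoids invoking \eqref{P7eqn14} pointwise (for $L^2$ data it holds only in an $L^2$/a.e.\ sense), but pays for this with an interchange of the $\bldt,\bldx,\bldxi$ integrations that it never justifies. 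Your Fubini step has the same status: the $L^2$ bound from Theorem \ref{P7Theo4.3} does not by itself give absolute convergence of the $(\bldx,\bldxi)$ double integral, so on this point the two arguments are equally (in)complete. Finally, you are right to flag the normalization, and this is actually a flaw in the paper: its first equality $\langle f,h\rangle=\langle \mathcal{S}^{\boldsymbol\alpha,\boldsymbol\beta}_{\mathbb{H},g}f,\mathcal{S}^{\boldsymbol\alpha,\boldsymbol\beta}_{\mathbb{H},g}h\rangle$ holds only if $\|g\|_{L^2_\mathbb{H}(\mathbb{R}^4)}=1$ (otherwise a factor $\|g\|^2_{L^2_\mathbb{H}(\mathbb{R}^4)}$ appears, by the remark following Theorem \ref{P7Theo4.3}), a hypothesis the theorem's statement omits; your explicit identification of the otherwise-missing factor $\|g\|^{-2}_{L^2_\mathbb{H}(\mathbb{R}^4)}$ is a point in your favor.
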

\begin{proof}
Using theorem \ref{P7Theo4.3}, we see that
\begin{align*}
\langle f,h\rangle
&=Sc\int_{\mathbb{R}^4}\int_{\mathbb{R}^4}\left(\mathcal{S}^{\boldsymbol\alpha,\boldsymbol\beta}_{\mathbb{H},g}f\right)(\bldx,\bldxi)\overline{\left\{\int_{\mathbb{R}^4}\mathcal{K}^i_{\UVWX,\XWVU}(\bldt_1,\bldxi_1)h(\bldt)\overline{g(\bldt-\bldx)}\mathcal{K}^j_{\PQRS,\SRQP}(\bldt_2,\bldxi_2)d\bldt\right\}}d\bldx d\bldxi\\
&=\int_{\mathbb{R}^4}\int_{\mathbb{R}^4}\int_{\mathbb{R}^4}Sc\left\{\left(\mathcal{S}^{\boldsymbol\alpha,\boldsymbol\beta}_{\mathbb{H},g}f\right)(\bldx,\bldxi)\overline{\mathcal{K}^j_{\PQRS,\SRQP}(\bldt_2,\bldxi_2)}g(\bldt-\bldx)\overline{h(\bldt)}\overline{\mathcal{K}^i_{\UVWX,\XWVU}(\bldt_1,\bldxi_1)}\right\}d\bldt d\bldx d\bldxi\\
&=Sc\int_{\mathbb{R}^4}\left\{\int_{\mathbb{R}^4}\int_{\mathbb{R}^4} \overline{\mathcal{K}^i_{\UVWX,\XWVU}(\bldt_1,\bldxi_1)}\left(\mathcal{S}^{\boldsymbol\alpha,\boldsymbol\beta}_{\mathbb{H},g}f\right)(\bldx,\bldxi)\overline{\mathcal{K}^j_{\PQRS,\SRQP}(\bldt_2,\bldxi_2)}g(\bldt-\bldx)d\bldx d\bldxi\right\}\overline{h(\bldt)}d\bldt\\
&=\left\langle\int_{\mathbb{R}^4}\int_{\mathbb{R}^4} \overline{\mathcal{K}^i_{\UVWX,\XWVU}(\bldt_1,\bldxi_1)}\left(\mathcal{S}^{\boldsymbol\alpha,\boldsymbol\beta}_{\mathbb{H},g}f\right)(\bldx,\bldxi)\overline{\mathcal{K}^j_{\PQRS,\SRQP}(\bldt_2,\bldxi_2)}g(\cdot-\bldx)d\bldx d\bldxi,h(\cdot)\right\rangle.
\end{align*}
Since $h\in L^2_\mathbb{H}(\mathbb{R}^4)$ is arbitrary, it follows that
$$f(\bldt)=\int_{\mathbb{R}^4}\int_{\mathbb{R}^4}\overline{\mathcal{K}^i_{\UVWX,\XWVU}(\bldt_1,\bldxi_1)}\left(\mathcal{S}^{\boldsymbol\alpha,\boldsymbol\beta}_{\mathbb{H},g}f\right)(\bldx,\bldxi)\overline{\mathcal{K}^j_{\PQRS,\SRQP}(\bldt_2,\bldxi_2)}g(\bldt-\bldx)d\bldx d\bldxi.$$
This finishes the proof.
\end{proof}

\subsection{Uncertainty principle for STQCFrFT}
Similar to the Heisenberg UP, that governs the localization of a function and its FT, Wilczok (\cite{wilczok2000new}) presented a novel form of UP that compares the localization of both a function and its windowed FT. 

Here, we give the Lieb's UP for the STQCFrFT. In our recent paper \cite{gupta2022short} we have obtained an analogous result for the short time quaternion QPFT. We first obtain the Lieb's inequality for the STQCFrFT. 

\begin{lemma}\label{P7Lemma4.7}
Let $2\leq q<\infty,$ $f\in L^2_\mathbb{H}(\mathbb{R}^4)$ and $g$ be a QWF. Then
\begin{align}\label{P7eqn41}
\left\|\mathcal{S}^{\boldsymbol\alpha,\boldsymbol\beta}_{\mathbb{H},g}f \right\|_{L^q_\mathbb{H}(\mathbb{R}^4\times\mathbb{R}^4)}\leq\left|\sin\left(\frac{\UVWX+\XWVU}{2}\right)\right|^{\frac{2}{q}-1}\left|\sin\left(\frac{\PQRS+\SRQP}{2}\right)\right|^{\frac{2}{q}-1}\left(\frac{2}{q}\right)^\frac{4}{q}\|g\|_{L^2_\mathbb{H}(\mathbb{R}^4)}\|f\|_{L^2_\mathbb{H}(\mathbb{R}^4)}.
\end{align}
\end{lemma}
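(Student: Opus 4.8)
The plan is to reduce the statement to the sharp Hausdorff--Young inequality of Theorem~\ref{P7Theo3.1} together with the sharp Young convolution inequality, following the scheme by which Lieb's inequality is classically established. The starting point is the identity already exploited in the proof of Theorem~\ref{P7Theo4.3}: for each fixed $\bldx$,
\begin{align*}
\left(\mathcal{S}^{\boldsymbol\alpha,\boldsymbol\beta}_{\mathbb{H},g}f\right)(\bldx,\bldxi)=\left(\mathcal{F}^{\boldsymbol\alpha,\boldsymbol\beta}_{\mathbb{H}}\left\{f(\cdot)\overline{g(\cdot-\bldx)}\right\}\right)(\bldxi).
\end{align*}
First I would freeze $\bldx$ and apply Theorem~\ref{P7Theo3.1} to the windowed function $f(\cdot)\overline{g(\cdot-\bldx)}$, with conjugate exponent $p$ determined by $\frac1p+\frac1q=1$; since $q\geq 2$ we have $1\leq p\leq 2$, so the hypothesis is met. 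Writing $C^{(q)}=\left|\sin\left(\frac{\UVWX+\XWVU}{2}\right)\right|^{\frac2q-1}\left|\sin\left(\frac{\PQRS+\SRQP}{2}\right)\right|^{\frac2q-1}$ and keeping the Beckner factor $A_p^4$ produced in the proof of Theorem~\ref{P7Theo3.1}, this bounds the inner frequency integral by $\left(C^{(q)}A_p^4\right)^q\left(\int_{\mathbb{R}^4}|f(\bldt)|^p|g(\bldt-\bldx)|^p\,d\bldt\right)^{q/p}$. Here the multiplicativity $|fg|=|f||g|$ of the quaternion modulus lets the product $f(\cdot)\overline{g(\cdot-\bldx)}$ be handled exactly as in the scalar case.

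Next I would integrate over $\bldx$. Setting $F=|f|^p$ and $\check G(\bldu)=|g(-\bldu)|^p$, the inner integral is the convolution $(F*\check G)(\bldx)$, so that
\begin{align*}
\left\|\mathcal{S}^{\boldsymbol\alpha,\boldsymbol\beta}_{\mathbb{H},g}f\right\|^q_{L^q_\mathbb{H}(\mathbb{R}^4\times\mathbb{R}^4)}\leq \left(C^{(q)}A_p^4\right)^q\left\|F*\check G\right\|^{q/p}_{L^{q/p}(\mathbb{R}^4)}.
\end{align*}
To the last factor I would apply the sharp Young convolution inequality with exponents $a=b=\frac2p$ and target $r=\frac qp$; one verifies $\frac1a+\frac1b=p=1+\frac1r$, so the hypotheses hold, while $pa=pb=2$ gives $\|F\|_a=\|f\|_2^{\,p}$ and $\|\check G\|_b=\|g\|_2^{\,p}$ (reflection does not change $L^r$ norms). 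After taking the $q$-th root this reduces the bound to $C^{(q)}\left(A_p^4\cdot(\text{sharp Young constant})^{1/q}\right)\|f\|_{L^2_\mathbb{H}(\mathbb{R}^4)}\|g\|_{L^2_\mathbb{H}(\mathbb{R}^4)}$.

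The step I expect to be the main obstacle is the final algebraic simplification: showing that the product of the sharp Hausdorff--Young constant $A_p^4=\left(p^{1/p}/q^{1/q}\right)^2$ and the Beckner--Brascamp--Lieb constant from sharp Young collapses, once $\frac1p+\frac1q=1$ is inserted, to the clean value $\left(\frac2q\right)^{4/q}$. This cancellation is exactly what distinguishes Lieb's sharp constant from the weaker bound $A_p^4\|f\|_2\|g\|_2$ obtained with the non-sharp Young inequality: \emph{both} sharp inequalities must be used in tandem. As a consistency check, at $q=2$ one has $p=2$, $A_p^4=1$ and $\left(\frac2q\right)^{4/q}=1$, so the estimate degenerates to the Parseval identity \eqref{P7eqn37}; for general $q$ a direct bookkeeping of the exponents confirms the collapse. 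Multiplying the simplified constant by $C^{(q)}$ then yields precisely \eqref{P7eqn41}.
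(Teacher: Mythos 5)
Your proposal is correct and is essentially the paper's own proof: the paper also freezes $\bldx$, applies the sharp Hausdorff--Young inequality (with the $A_p^4$ factor) to $f(\cdot)\overline{g(\cdot-\bldx)}$, recognizes the integral in $\bldx$ as $\left\||f|^p\star|\tilde{g}|^p\right\|_{L^{q/p}_\mathbb{H}(\mathbb{R}^4)}^{q/p}$ with $\tilde g(\bldt)=g(-\bldt)$, and then invokes the sharp Young convolution inequality with exponents $k=2/p$ and target $l=q/p$. The final constant collapse you flag as the main obstacle is carried out explicitly in the paper (using $k'=2l'$ and $\frac1p+\frac1q=1$) and yields exactly $\left(\frac{2}{q}\right)^{4/q}$, confirming your bookkeeping.
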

\begin{proof}
Using the definition \ref{P7Defn4.1}, we have
\begin{align}\label{P7eqn42}
\left(\int_{\mathbb{R}^4}\left|\left(\mathcal{S}^{\boldsymbol\alpha,\boldsymbol\beta}_{\mathbb{H},g}f \right)(\bldx,\bldxi)\right|^qd\bldxi\right)^\frac{1}{q}=\left(\int_{\mathbb{R}^4}\left|\left(\mathcal{F}^{\boldsymbol\alpha,\boldsymbol\beta}_{\mathbb{H}}\{f(\cdot)\overline{g(\cdot-\bldx)}\}\right)(\bldxi)\right|^qd\bldxi\right)^\frac{1}{q}.
\end{align}
Using Hausdorff-Young inequality, we get
\begin{align*}
\left(\int_{\mathbb{R}^4}\left|\left(\mathcal{S}^{\boldsymbol\alpha,\boldsymbol\beta}_{\mathbb{H},g}f \right)(\bldx,\bldxi)\right|^qd\bldxi\right)^\frac{1}{q}
&\leq A_p^4\left|\sin\left(\frac{\UVWX+\XWVU}{2}\right)\right|^{\frac{2}{q}-1}\left|\sin\left(\frac{\PQRS+\SRQP}{2}\right)\right|^{\frac{2}{q}-1}\left(\int_{\mathbb{R}^4}\left|f(\bldt)\overline{g(\bldt-\bldx)}\right|^pd\bldt\right)^{\frac{1}{p}}\\
&=A_p^4\left|\sin\left(\frac{\UVWX+\XWVU}{2}\right)\right|^{\frac{2}{q}-1}\left|\sin\left(\frac{\PQRS+\SRQP}{2}\right)\right|^{\frac{2}{q}-1}\left(\int_{\mathbb{R}^4}|f(\bldt)|^p|\tilde{g}(\bldx-\bldt)|^pd\bldt\right)^{\frac{1}{p}},~\tilde{g}(\bldt)=g(-\bldt)\\
&=A_p^4\left|\sin\left(\frac{\UVWX+\XWVU}{2}\right)\right|^{\frac{2}{q}-1}\left|\sin\left(\frac{\PQRS+\SRQP}{2}\right)\right|^{\frac{2}{q}-1}\left\{\left(|f|^p\star|\tilde{g}|^p\right)(\bldx)\right\}^{\frac{1}{p}}.
\end{align*}
This leads to
\begin{align*}
\int_{\mathbb{R}^4}\int_{\mathbb{R}^4}\left|\left(\mathcal{S}^{\boldsymbol\alpha,\boldsymbol\beta}_{\mathbb{H},g}f \right)(\bldx,\bldxi)\right|^qd\bldx d\bldxi\leq A_p^{4q}\left|\sin\left(\frac{\UVWX+\XWVU}{2}\right)\right|^{2-q}\left|\sin\left(\frac{\PQRS+\SRQP}{2}\right)\right|^{2-q}\int_{\mathbb{R}^4}\left\{\left(|f|^p\star|\tilde{g}|^p\right)(\bldx)\right\}^{\frac{q}{p}}d\bldx.
\end{align*}
This implies
\begin{align}\label{P7eqn43}
\left\{\int_{\mathbb{R}^4}\int_{\mathbb{R}^4}\left|\left(\mathcal{S}^{\boldsymbol\alpha,\boldsymbol\beta}_{\mathbb{H},g}f \right)(\bldx,\bldxi)\right|^qd\bldx d\bldxi\right\}^\frac{1}{q}
&\leq A_p^{4q}\left|\sin\left(\frac{\UVWX+\XWVU}{2}\right)\right|^{2-q}\left|\sin\left(\frac{\PQRS+\SRQP}{2}\right)\right|^{2-q}\left[\int_{\mathbb{R}^4}\left\{\left(|f|^p\star|\tilde{g}|^p\right)(\bldx)\right\}^{\frac{q}{p}}d\bldx\right]^{\frac{p}{q}\cdot\frac{1}{p}}\notag\\
&=A_p^{4}\left|\sin\left(\frac{\UVWX+\XWVU}{2}\right)\right|^{\frac{2}{q}-1}\left|\sin\left(\frac{\PQRS+\SRQP}{2}\right)\right|^{\frac{2}{q}-1}\left\||f|^p\star|\tilde{g}|^p\right\|^{\frac{1}{p}}_{L^{\frac{q}{p}}_\mathbb{H}(\mathbb{R}^4)}.
\end{align}
We observe that, if $l=\frac{q}{p},~k=\frac{2}{p},$ then $k\geq 1$ and $1+\frac{1}{l}=\frac{1}{k}+\frac{1}{k}.$
Since $|\tilde{g}|^p,~|f|^p\in L^k_\mathbb{H}(\mathbb{R}^4),$ by Young's inequality, we have
\begin{align}\label{P7eqn44}
\left\||f|^p\star|\tilde{g}|^p\right\|^{\frac{1}{p}}_{L^{\frac{q}{p}}_\mathbb{H}(\mathbb{R}^4)}\leq A_k^8A_{l'}^4\|f\|^p_{L^2_\mathbb{H}(\mathbb{R}^4)}\|\tilde{g}\|^p_{L^2_\mathbb{H}(\mathbb{R}^4)}.
\end{align}
Thus, by virtue of \eqref{P7eqn43} and \eqref{P7eqn44}, it follows that
\begin{align}\label{P7eqn45}
\left\{\int_{\mathbb{R}^4}\int_{\mathbb{R}^4}\left|\left(\mathcal{S}^{\boldsymbol\alpha,\boldsymbol\beta}_{\mathbb{H},g}f \right)(\bldx,\bldxi)\right|^qd\bldx d\bldxi\right\}^\frac{1}{q}\leq\left|\sin\left(\frac{\UVWX+\XWVU}{2}\right)\right|^{\frac{2}{q}-1}\left|\sin\left(\frac{\PQRS+\SRQP}{2}\right)\right|^{\frac{2}{q}-1} \left(A_p^2A_k^{\frac{4}{p}}A_{l'}^{\frac{2}{p}}\right)^{2}\|g\|_{L^2_\mathbb{H}(\mathbb{R}^4)}\|f\|_{L^2_\mathbb{H}(\mathbb{R}^4)},
\end{align}
where $A_r=\left(\frac{r^{\frac{1}{r}}}{r'^{\frac{1}{r'}}}\right)^{\frac{1}{2}},~\frac{1}{r}+\frac{1}{r'}=1.$ 
Now, we have
\begin{align}\label{P7eqn46}
A_p^2A_k^{\frac{4}{p}}A_{l'}^{\frac{2}{p}}
&=\frac{p^{\frac{1}{p}}}{q\frac{1}{q}}\cdot\frac{k}{{k'}^\frac{2}{k'p}}\cdot\frac{{l'}^\frac{1}{pl'}}{\left(\frac{q}{p}\right)^{\frac{1}{q}}},~\mbox{since}~k=\frac{2}{q},~l=\frac{q}{p}\notag\\
&=\frac{p}{q^{\frac{2}{q}}}\cdot\frac{{l'}^{\frac{1}{pl'}}}{{k'}^{\frac{2}{k'p}}}\notag\\
&\leq \frac{2}{q^{\frac{2}{q}}}\cdot\left(\frac{1}{2}\right)^{\frac{q-p}{pq}},~\mbox{since}~k'=2l'\notag\\
&=\left(\frac{2}{q}\right)^{\frac{2}{q}}.
\end{align}
Thus using equation \eqref{P7eqn46} in \eqref{P7eqn45}, we get
\begin{align*}
\left\{\int_{\mathbb{R}^4}\int_{\mathbb{R}^4}\left|\left(\mathcal{S}^{\boldsymbol\alpha,\boldsymbol\beta}_{\mathbb{H},g}f \right)(\bldx,\bldxi)\right|^qd\bldx d\bldxi\right\}^\frac{1}{q}\leq \left|\sin\left(\frac{\UVWX+\XWVU}{2}\right)\right|^{\frac{2}{q}-1}\left|\sin\left(\frac{\PQRS+\SRQP}{2}\right)\right|^{\frac{2}{q}-1} \left(\frac{2}{q}\right)^{\frac{4}{q}}\|f\|_{L^2_\mathbb{H}(\mathbb{R}^4)}\|g\|_{L^2_\mathbb{H}(\mathbb{R}^4)}.
\end{align*}
This completes the proof.
\end{proof}
\begin{remark} For $\boldsymbol\alpha=\boldsymbol\beta,$ the  inequality \eqref{P7eqn41} reduces to the Lieb's inequality for the STQFrFT of the function $f\in L^2_{\mathbb{H}}(\mathbb{R}^4).$ Moreover, if $\boldsymbol\alpha=\boldsymbol\beta=(\frac{\pi}{2},\frac{\pi}{2}),$ inequality \eqref{P7eqn41} reduces to the Lieb's inequality for the STQFT of the function $f\in L^2_{\mathbb{H}}(\mathbb{R}^4).$
\end{remark}
\subsection{Lieb's uncertainty principle}
\begin{definition}\label{P7Defn4.4}
Let $\Omega\subset\mathbb{R}^n$ be measurable. A function $G\in L^2_\mathbb{H}(\mathbb{R}^n)$ is $\epsilon-$concentrated, $\epsilon\geq 0,$  on $\Omega$ if 
$$\|\chi_{\Omega^c}G\|_{L^2_\mathbb{H}(\mathbb{R}^n)}\leq \epsilon\|G\|_{L^2_\mathbb{H}(\mathbb{R}^n)},$$
where $\chi_{\Omega}$ takes the value $1$ on $\Omega$ and $0$ otherwise. 
\end{definition}
\begin{theorem}\label{P7Theo4.8}
Let $0\neq f\in L^2_\mathbb{H}(\mathbb{R}^4)$ and $g$ be a QWF. If $\epsilon\geq 0$ and $\mathcal{S}^{\boldsymbol\alpha,\boldsymbol\beta}_{\mathbb{H},g}f$ is $\epsilon-$concentrated on $\Omega\subset\mathbb{R}^4\times\mathbb{R}^4,$ then
\begin{align}\label{P7Bound_of_EssentialSupportofOmega}
|\Omega|\geq \left|\sin\left(\frac{\UVWX+\XWVU}{2}\right)\right|^2\left|\sin\left(\frac{\PQRS+\SRQP}{2}\right)\right|^2(1-\epsilon^2)^{\frac{q}{q-2}}\left(\frac{q}{2}\right)^{\frac{8}{q-2}},~q>2
\end{align}
where $|\Omega|$ denoted the Lebesgue measure of $\Omega.$
\end{theorem}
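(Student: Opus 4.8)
The plan is to abbreviate $F:=\mathcal{S}^{\boldsymbol\alpha,\boldsymbol\beta}_{\mathbb{H},g}f$ and to squeeze $\int_\Omega|F|^2$ between a lower bound coming from the concentration hypothesis and an upper bound coming from the $L^q$ Lieb estimate, the bridge between the two being H\"older's inequality on $\Omega$. First I would record the lower bound. The energy identity \eqref{P7eqn37} gives $\|F\|^2_{L^2_\mathbb{H}(\mathbb{R}^4\times\mathbb{R}^4)}=\|g\|^2_{L^2_\mathbb{H}(\mathbb{R}^4)}\|f\|^2_{L^2_\mathbb{H}(\mathbb{R}^4)}$, and writing $\|\chi_\Omega F\|^2_{L^2}=\|F\|^2_{L^2}-\|\chi_{\Omega^c}F\|^2_{L^2}$ and invoking Definition \ref{P7Defn4.4} yields
$$\int_\Omega |F(\bldx,\bldxi)|^2\,d\bldx\,d\bldxi \;\geq\; (1-\epsilon^2)\,\|g\|^2_{L^2_\mathbb{H}(\mathbb{R}^4)}\|f\|^2_{L^2_\mathbb{H}(\mathbb{R}^4)}.$$

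Next I would bound the same integral from above. The key step is to apply H\"older's inequality on $\Omega$ with the conjugate exponents $\tfrac{q}{2}$ and $\tfrac{q}{q-2}$ (legitimate precisely because $q>2$), giving
$$\int_\Omega |F|^2 \;\leq\; \left(\int_\Omega |F|^q\right)^{2/q}|\Omega|^{(q-2)/q}\;\leq\; \|F\|^2_{L^q_\mathbb{H}(\mathbb{R}^4\times\mathbb{R}^4)}\,|\Omega|^{(q-2)/q},$$
where the last inequality merely enlarges the domain of integration in the $L^q$ factor. Into this I would substitute Lemma \ref{P7Lemma4.7}, namely $\|F\|_{L^q_\mathbb{H}(\mathbb{R}^4\times\mathbb{R}^4)}\le C_q\,\|g\|_{L^2_\mathbb{H}(\mathbb{R}^4)}\|f\|_{L^2_\mathbb{H}(\mathbb{R}^4)}$ with $C_q=\left|\sin\left(\frac{\UVWX+\XWVU}{2}\right)\right|^{\frac{2}{q}-1}\left|\sin\left(\frac{\PQRS+\SRQP}{2}\right)\right|^{\frac{2}{q}-1}\left(\frac{2}{q}\right)^{4/q}$.

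Chaining the lower and upper estimates and cancelling the strictly positive factor $\|g\|^2_{L^2}\|f\|^2_{L^2}$ (here I use the hypothesis $f\neq 0$ and that a QWF is nonzero, so both norms are positive) leaves $(1-\epsilon^2)\le C_q^2\,|\Omega|^{(q-2)/q}$, hence $|\Omega|\ge\big((1-\epsilon^2)\,C_q^{-2}\big)^{q/(q-2)}$. The remaining task is the exponent bookkeeping: since $\big(1-\tfrac{2}{q}\big)\cdot\tfrac{q}{q-2}=1$, the squared sine factors sitting in $C_q^{-2}$ emerge to the power $2$; and because $\tfrac{8}{q}\cdot\tfrac{q}{q-2}=\tfrac{8}{q-2}$, the term $\left(\tfrac{q}{2}\right)^{8/q}$ raised to $q/(q-2)$ becomes $\left(\tfrac{q}{2}\right)^{8/(q-2)}$, while the factor $(1-\epsilon^2)$ is raised to $q/(q-2)$. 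These combine to give exactly \eqref{P7Bound_of_EssentialSupportofOmega}. There is no deep obstacle here; the only delicate point is pairing the H\"older exponents correctly and carrying every power of $C_q$ faithfully through the final root $q/(q-2)$, where a single sign slip would corrupt the constant or reverse the inequality.
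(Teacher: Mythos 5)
Your proposal is correct and follows essentially the same route as the paper: the lower bound $\int_\Omega|F|^2\geq(1-\epsilon^2)\|g\|^2_{L^2_\mathbb{H}(\mathbb{R}^4)}\|f\|^2_{L^2_\mathbb{H}(\mathbb{R}^4)}$ from the concentration hypothesis together with the energy identity \eqref{P7eqn37}, the upper bound via H\"older's inequality with exponents $\tfrac{q}{2}$ and $\tfrac{q}{q-2}$ followed by the Lieb inequality \eqref{P7eqn41}, and the same exponent bookkeeping to isolate $|\Omega|$. Your explicit remark that one must have $\|f\|_{L^2}\|g\|_{L^2}>0$ before cancelling is a point the paper leaves implicit, but otherwise the two arguments coincide.
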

\begin{proof}
Since $\mathcal{S}^{\wedge_1,\wedge_2}_{\mathbb{H},g}f$ is $\epsilon-$concentrated on $\Omega,$ we have
\begin{align*}
\left\|\chi_{\Omega^c}\mathcal{S}^{\boldsymbol\alpha,\boldsymbol\beta}_{\mathbb{H},g}f\right\|^2_{L^2_\mathbb{H}(\mathbb{R}^4\times\mathbb{R}^4)}\leq \epsilon^2\|g\|^2_{L^2_\mathbb{H}(\mathbb{R}^4)}\|f\|^2_{L^2_\mathbb{H}(\mathbb{R}^4)}.
\end{align*}
This gives
\begin{align}\label{P7eqn47}
\left\|\chi_{\Omega}\mathcal{S}^{\boldsymbol\alpha,\boldsymbol\beta}_{\mathbb{H},g}f\right\|^2_{L^2_\mathbb{H}(\mathbb{R}^4\times\mathbb{R}^4)}\geq (1-\epsilon^2)\|g\|^2_{L^2_\mathbb{H}(\mathbb{R}^4)}\|f\|^2_{L^2_\mathbb{H}(\mathbb{R}^4)}.
\end{align}
By Holder's inequality, we get
\begin{align*}
\left\|\chi_{\Omega}\mathcal{S}^{\boldsymbol\alpha,\boldsymbol\beta}_{\mathbb{H},g}f\right\|^2_{L^2_\mathbb{H}(\mathbb{R}^4\times\mathbb{R}^4)}\leq &\left\{\int_{\mathbb{R}^4}\int_{\mathbb{R}^4}\left(\chi_{\Omega}(\bldx,\bldxi)\right)^{\frac{q}{q-2}}d\bldx d\bldxi\right\}^{\frac{q-2}{2}}\left\{\int_{\mathbb{R}^4}\int_{\mathbb{R}^4}\left(\left|\left(\mathcal{S}^{\boldsymbol\alpha,\boldsymbol\beta}_{\mathbb{H},g}f\right)(\bldx,\bldxi)\right|^2\right)^{\frac{q}{2}}d\bldx d\bldxi\right\}^{\frac{2}{q}}\\
&=|\Omega|^{\frac{q-2}{q}}\left\|\mathcal{S}^{\boldsymbol\alpha,\boldsymbol\beta}_{\mathbb{H},g}f\right\|^2_{L^q_\mathbb{H}(\mathbb{R}^4\times\mathbb{R}^4)}.
\end{align*}
Using, the Lieb's inequality \eqref{P7eqn41}, we get
\begin{align}\label{P7eqn48}
\left\|\chi_{\Omega}\mathcal{S}^{\boldsymbol\alpha,\boldsymbol\beta}_{\mathbb{H},g}f\right\|^2_{L^2_\mathbb{H}(\mathbb{R}^4\times\mathbb{R}^4)}\leq |\Omega|^{\frac{q-2}{q}}\left|\sin\left(\frac{\UVWX+\XWVU}{2}\right)\right|^{\frac{4}{q}-2}\left|\sin\left(\frac{\PQRS+\SRQP}{2}\right)\right|^{\frac{4}{q}-2} \left(\frac{2}{q}\right)^{\frac{8}{q}}\|f\|^2_{L^2_\mathbb{H}(\mathbb{R}^4)}\|g\|^2_{L^2_\mathbb{H}(\mathbb{R}^4)}.
\end{align}
From equation \eqref{P7eqn47} and equation \eqref{P7eqn48}, we get
\begin{align*}
 |\Omega|^{\frac{q-2}{q}}\left|\sin\left(\frac{\UVWX+\XWVU}{2}\right)\right|^{\frac{4}{q}-2}\left|\sin\left(\frac{\PQRS+\SRQP}{2}\right)\right|^{\frac{4}{q}-2} \left(\frac{2}{q}\right)^{\frac{8}{q}}\geq (1-\epsilon^2).
\end{align*}
This gives
\begin{align*}
&|\Omega|\geq \left(\left|\sin\left(\frac{\UVWX+\XWVU}{2}\right)\right|\left|\sin\left(\frac{\PQRS+\SRQP}{2}\right)\right|\right)^{2\left(1-\frac{2}{q}\right)\frac{q}{q-2}}(1-\epsilon^2)^{\frac{q}{q-2}}\left(\frac{q}{2}\right)^{\frac{8}{q-2}},~\mbox{since}~\frac{1}{p}+\frac{1}{q}=1\\
&\mbox{i.e.,}~|\Omega|\geq \left|\sin\left(\frac{\UVWX+\XWVU}{2}\right)\right|^2\left|\sin\left(\frac{\PQRS+\SRQP}{2}\right)\right|^2(1-\epsilon^2)^{\frac{q}{q-2}}\left(\frac{q}{2}\right)^{\frac{8}{q-2}}.
\end{align*}
This proves \eqref{P7Bound_of_EssentialSupportofOmega}.
\end{proof}
\begin{remark}
If $\epsilon=0,$ from \eqref{P7Bound_of_EssentialSupportofOmega}, we see that
\begin{align}
&\left|\rm{supp}\left(\mathcal{S}^{\boldsymbol\alpha,\boldsymbol\beta}_{\mathbb{H},g}f\right)\right|\geq \left|\sin\left(\frac{\UVWX+\XWVU}{2}\right)\right|^2\left|\sin\left(\frac{\PQRS+\SRQP}{2}\right)\right|^2\lim_{q\rightarrow 2+}\left(\frac{q}{2}\right)^{\frac{8}{q-2}}\notag\\
&\mbox{i.e.,}~\left|\rm{supp}\left(\mathcal{S}^{\boldsymbol\alpha,\boldsymbol\beta}_{\mathbb{H},g}f\right)\right|\geq \left|\sin\left(\frac{\UVWX+\XWVU}{2}\right)\right|^2\left|\sin\left(\frac{\PQRS+\SRQP}{2}\right)\right|^2e^4.
\end{align}
\end{remark}
i.e., measure of the supp $\mathcal{S}^{\boldsymbol\alpha,\boldsymbol\beta}_{\mathbb{H},g}f\geq \left|\sin\left(\frac{\UVWX+\XWVU}{2}\right)\right|^2\left|\sin\left(\frac{\PQRS+\SRQP}{2}\right)\right|^2e^4.$

\begin{remark} For $\boldsymbol\alpha=\boldsymbol\beta,$ the  theorem \ref{P7Theo4.8} gives the Lieb's UP for the STQFrFT of the function $f\in L^2_{\mathbb{H}}(\mathbb{R}^4).$ Moreover, if $\boldsymbol\alpha=\boldsymbol\beta=(\frac{\pi}{2},\frac{\pi}{2}),$ theorem \ref{P7Theo4.8} gives the Lieb's UP for the STQFT of the function $f\in L^2_{\mathbb{H}}(\mathbb{R}^4).$
\end{remark}

\subsection{Entropy uncertainty principle}
\begin{theorem}\label{P7Theo4.9}
Let $f\in L^2_\mathbb{H}(\mathbb{R}^4)$ and $g$ be a QWF with $\|f\|_{L^2_\mathbb{H}(\mathbb{R}^4)}\|g\|_{L^2_\mathbb{H}(\mathbb{R}^4)}=1,$ then
\begin{align}\label{P7eqn54}
\mathcal{E}_{S}(f,g,\boldsymbol\alpha,\boldsymbol\beta)\geq 2\left[2+\log\left(\left|\sin\left(\frac{\UVWX+\XWVU}{2}\right)\right|\left|\sin\left(\frac{\PQRS+\SRQP}{2}\right)\right|\right)\right],
\end{align}
where
$\displaystyle\mathcal{E}_{S}(f,g,\boldsymbol\alpha,\boldsymbol\beta)=-\int_{\mathbb{R}^4}\int_{\mathbb{R}^4}\left|\left(\mathcal{S}_{\mathbb{H},g}^{\boldsymbol\alpha,\boldsymbol\beta}f\right)(\bldx,\bldxi)\right|^2\log\left(\left|\left(\mathcal{S}_{\mathbb{H},g}^{\boldsymbol\alpha,\boldsymbol\beta}f\right)(\bldx,\bldxi)\right|^2\right)d\bldx d\bldxi.$
\end{theorem}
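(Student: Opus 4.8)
The plan is to exploit the fact that Lieb's inequality from Lemma \ref{P7Lemma4.7} degenerates to an \emph{equality} at the endpoint $q=2$, and then to extract the entropy bound by differentiating in $q$. Write $F=\mathcal{S}^{\boldsymbol\alpha,\boldsymbol\beta}_{\mathbb{H},g}f$ on $\mathbb{R}^4\times\mathbb{R}^4$. By the orthogonality relation \eqref{P7eqn37} together with the normalization $\|f\|_{L^2_\mathbb{H}(\mathbb{R}^4)}\|g\|_{L^2_\mathbb{H}(\mathbb{R}^4)}=1$, we have $\|F\|_{L^2_\mathbb{H}(\mathbb{R}^4\times\mathbb{R}^4)}^2=1$, so $|F|^2$ is a probability density on $\mathbb{R}^8$ and $\mathcal{E}_{S}(f,g,\boldsymbol\alpha,\boldsymbol\beta)$ is its Shannon entropy. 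Abbreviate $s_1=\left|\sin\left(\frac{\UVWX+\XWVU}{2}\right)\right|$ and $s_2=\left|\sin\left(\frac{\PQRS+\SRQP}{2}\right)\right|$, and let $B(q)=s_1^{\frac{2}{q}-1}s_2^{\frac{2}{q}-1}\left(\frac{2}{q}\right)^{4/q}$ denote the right-hand side of \eqref{P7eqn41} (the factor $\|f\|\|g\|$ being $1$).

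Next I would set up the one-sided derivative comparison. Define the deficit $D(q)=B(q)-\|F\|_{L^q_\mathbb{H}(\mathbb{R}^4\times\mathbb{R}^4)}$ for $q\geq 2$. Since $s_1^{0}s_2^{0}(1)^{2}=1$ and $\|F\|_{L^2}=1$, we get $D(2)=0$, while Lemma \ref{P7Lemma4.7} gives $D(q)\geq 0$ for all $q>2$. Hence the forward difference quotient $D(q)/(q-2)$ is nonnegative near $q=2^+$, so the right derivative satisfies $D'(2^+)\geq 0$, i.e.
\[
B'(2)\;\geq\;\frac{d}{dq}\Big|_{q=2}\|F\|_{L^q_\mathbb{H}(\mathbb{R}^4\times\mathbb{R}^4)}.
\]
The remaining work is to evaluate both derivatives. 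For the bound side, logarithmic differentiation of $\log B(q)=\left(\frac{2}{q}-1\right)\log(s_1 s_2)+\frac{4}{q}\log\frac{2}{q}$ and the facts $B(2)=1$, $\frac{d}{dq}\left(\frac{2}{q}-1\right)\big|_{q=2}=-\frac12$, $\frac{d}{dq}\left(\frac{4}{q}\log\frac{2}{q}\right)\big|_{q=2}=-1$ give $B'(2)=-\tfrac12\log(s_1 s_2)-1$. For the norm side, writing $N(q)=\int_{\mathbb{R}^8}|F|^q$ and $\|F\|_{L^q}=N(q)^{1/q}$, and using $N(2)=1$ so that $\log N(2)=0$, one finds $\frac{d}{dq}\|F\|_{L^q}\big|_{q=2}=\frac12 N'(2)=\frac12\int|F|^2\log|F|=\frac14\int|F|^2\log|F|^2=-\tfrac14\,\mathcal{E}_{S}(f,g,\boldsymbol\alpha,\boldsymbol\beta)$. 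Substituting these into the derivative inequality and multiplying through by $-4$ reverses it into
\[
\mathcal{E}_{S}(f,g,\boldsymbol\alpha,\boldsymbol\beta)\;\geq\;4+2\log(s_1 s_2)\;=\;2\bigl[2+\log(s_1 s_2)\bigr],
\]
which is exactly \eqref{P7eqn54}.

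The step I expect to be the genuine obstacle is the rigorous justification that $N(q)=\int_{\mathbb{R}^8}|F|^q$ is differentiable at $q=2$ with $N'(2)=\int_{\mathbb{R}^8}|F|^2\log|F|$, since the integrand $|F|^q\log|F|$ has a logarithmic singularity where $|F|$ is small and may be large in the tails where $|F|$ exceeds $1$. I would handle this by splitting $\mathbb{R}^8$ into the regions $\{|F|\leq 1\}$ and $\{|F|>1\}$ and applying dominated convergence on each: on $\{|F|>1\}$ the bound $|F|^q\leq |F|^{q_0}$ for $q\le q_0$ together with $F\in L^{q_0}_\mathbb{H}$ (guaranteed for every $q_0\geq 2$ by Lemma \ref{P7Lemma4.7}) controls the integrand, while on $\{|F|\leq 1\}$ the elementary bound $|F|^q|\log|F||\lesssim |F|^{2-\varepsilon}$ for $q$ near $2$ provides an integrable majorant. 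This differentiation-under-the-integral argument is the crux; once it is in place, the entropy bound follows mechanically from the derivative comparison above, and the result can be read as the limiting $\alpha_0\to1$ case of the R\`enyi-type estimate associated with Lemma \ref{P7Lemma4.7}.
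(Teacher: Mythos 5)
Your overall strategy is the same one the paper uses: by the orthogonality relation \eqref{P7eqn37} and the normalization $\|f\|_{L^2_\mathbb{H}(\mathbb{R}^4)}\|g\|_{L^2_\mathbb{H}(\mathbb{R}^4)}=1$, Lieb's inequality \eqref{P7eqn41} becomes an equality at $q=2$, and the entropy bound is extracted by comparing one-sided derivatives in $q$ at that endpoint. The paper phrases this with the difference quotients $R(s)$ of $I(f,g,\boldsymbol\alpha,\boldsymbol\beta,q)=\iint|F|^q$, where $F=\mathcal{S}^{\boldsymbol\alpha,\boldsymbol\beta}_{\mathbb{H},g}f$, rather than of the norms $\|F\|_{L^q}$, but that is only a chain-rule reformulation. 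Your computations $B'(2)=-\tfrac12\log(s_1s_2)-1$ and $\frac{d}{dq}\|F\|_{L^q}\big|_{q=2}=-\tfrac14\,\mathcal{E}_{S}(f,g,\boldsymbol\alpha,\boldsymbol\beta)$ are correct, as is the final algebra.

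However, the step you yourself flag as the crux is handled with a flawed majorant, and this is a genuine gap. On $\{|F|\le1\}$ you propose dominating $|F|^q|\log|F||$ by $C_\varepsilon|F|^{2-\varepsilon}$; the pointwise bound is true, but $|F|^{2-\varepsilon}$ need not be integrable. Everything established about $F$ (boundedness, \eqref{P7eqn37}, Lemma \ref{P7Lemma4.7}) places $F$ in $L^p_\mathbb{H}(\mathbb{R}^4\times\mathbb{R}^4)$ only for $p\ge2$, and the set $\{|F|\le1\}$ has infinite measure in $\mathbb{R}^8$; a function behaving like $|(\bldx,\bldxi)|^{-4}\bigl(\log|(\bldx,\bldxi)|\bigr)^{-1}$ at infinity is bounded and square-integrable on $\mathbb{R}^8$ yet lies in no $L^{2-\varepsilon}$, so dominated convergence cannot be run with your dominating function. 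The repair is exactly the paper's device: first assume $\mathcal{E}_{S}(f,g,\boldsymbol\alpha,\boldsymbol\beta)<\infty$ (otherwise \eqref{P7eqn54} is trivial; note $\mathcal{E}_S>-\infty$ always, since $F$ is bounded by Theorem \ref{P7Theo4.2}(i) and $\|F\|_{L^2}=1$, so the positive part of $|F|^2\log|F|^2$ has finite integral). Then on $\{|F|\le1\}$ use $|F|^q|\log|F||\le|F|^2|\log|F||=-\tfrac12|F|^2\log|F|^2$ for $q\ge2$ — this is the content of the paper's inequality $1+s\log a\le a^s$ in \eqref{P7eqn59} — and this majorant is integrable there precisely because $\mathcal{E}_S<\infty$. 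On $\{|F|>1\}$ your idea works after the small correction that you must dominate $|F|^q\log|F|$ and not merely $|F|^q$; either $\log|F|\le\log\|F\|_{L^\infty_\mathbb{H}}$ or $\log x\le x^{\varepsilon}/\varepsilon$ together with $F\in L^{q_0+\varepsilon}_\mathbb{H}$ closes this. With the majorant replaced in this way, your argument becomes correct and coincides in substance with the paper's proof.
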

\begin{proof}
Define 
\begin{align}\label{P7eqn55}
I(f,g,\boldsymbol\alpha,\boldsymbol\beta,q)=\int_{\mathbb{R}^4}\int_{\mathbb{R}^4}\left|\left(\mathcal{S}_{\mathbb{H},g}^{\boldsymbol\alpha,\boldsymbol\beta}f\right)(\bldx,\bldxi)\right|^qd\bldx d\bldxi.
\end{align}
Then using  \eqref{P7eqn55} in \eqref{P7eqn37}, we get
\begin{align}\label{P7eqn56}
I(f,g,\boldsymbol\alpha,\boldsymbol\beta,2)=1.
\end{align}
Also, from \eqref{P7eqn41} and \eqref{P7eqn56}, it can be shown that
\begin{align}\label{P7eqn57}
I(f,g,\boldsymbol\alpha,\boldsymbol\beta,q)\leq \left(\left|\sin\left(\frac{\UVWX+\XWVU}{2}\right)\right|\left|\sin\left(\frac{\PQRS+\SRQP}{2}\right)\right|\right)^{2-q}\left(\frac{2}{q}\right)^4.
\end{align}
For $s>0,$ define
\begin{align*}
R(s)=\frac{I(f,g,\boldsymbol\alpha,\boldsymbol\beta,2)-I(f,g,\boldsymbol\alpha,\boldsymbol\beta,2+2s)}{s}.
\end{align*}
Then
\begin{align}\label{P7eqn58}
R(s)
&\geq\frac{1}{s}\left\{1-\left(\left|\sin\left(\frac{\UVWX+\XWVU}{2}\right)\right|\left|\sin\left(\frac{\PQRS+\SRQP}{2}\right)\right|\right)^{-2s}\left(\frac{1}{1+s}\right)^4\right\}.
\end{align}
Assume that $\mathcal{E}_{S}(f,g,\boldsymbol\alpha,\boldsymbol\beta)<\infty,$ otherwise \eqref{P7eqn54} is obvious.\\
Now from the inequality $1+s \log a\leq a^s,~s>0,$ we have
\begin{align}\label{P7eqn59}
0\leq\frac{1}{s}\left|\left(\mathcal{S}_{\mathbb{H},g}^{\boldsymbol\alpha,\boldsymbol\beta}f\right)(\bldx,\bldxi)\right|^2\left(1-\left|\left(\mathcal{S}_{\mathbb{H},g}^{\boldsymbol\alpha,\boldsymbol\beta}f\right)(\bldx,\bldxi)\right|^{2s}\right)\leq-\left|\left(\mathcal{S}_{\mathbb{H},g}^{\boldsymbol\alpha,\boldsymbol\beta}f\right)(\bldx,\bldxi)\right|^2\log\left(\left|\left(\mathcal{S}_{\mathbb{H},g}^{\boldsymbol\alpha,\boldsymbol\beta}f\right)(\bldx,\bldxi)\right|^2\right).
\end{align}
Since, $-\left|\left(\mathcal{S}_{\mathbb{H},g}^{\boldsymbol\alpha,\boldsymbol\beta}f\right)(\bldx,\bldxi)\right|^2\log\left(\left|\left(\mathcal{S}_{\mathbb{H},g}^{\boldsymbol\alpha,\boldsymbol\beta}f\right)(\bldx,\bldxi)\right|^2\right)$ is integrable, using Lebesgue dominated convergence theorem in \eqref{P7eqn59}, we get
\begin{align}\label{P7eqn60}
\lim_{s\rightarrow 0+}R(s)
&=\int_{\mathbb{R}^4}\int_{\mathbb{R}^4}\lim_{s\rightarrow 0+}\left\{\frac{1}{s}\left|\left(\mathcal{S}_{\mathbb{H},g}^{\boldsymbol\alpha,\boldsymbol\beta}f\right)(\bldx,\bldxi)\right|^2\left(1-\left|\left(\mathcal{S}_{\mathbb{H},g}^{\boldsymbol\alpha,\boldsymbol\beta}f\right)(\bldx,\bldxi)\right|^{2s}\right)\right\}d\bldx d\bldxi\notag\\
&=\mathcal{E}_{S}(f,g,\boldsymbol\alpha,\boldsymbol\beta).
\end{align} 
Again from \eqref{P7eqn58}, we get
\begin{align}\label{P7eqn61}
\lim_{s\rightarrow 0+}R(s)\geq2\left[2+\log\left(\left|\sin\left(\frac{\UVWX+\XWVU}{2}\right)\right|\left|\sin\left(\frac{\PQRS+\SRQP}{2}\right)\right|\right)\right].
\end{align}
Thus, by virtue of \eqref{P7eqn60} and \eqref{P7eqn61}, equation \eqref{P7eqn54} follows. This finishes the proof.
\end{proof}

\begin{remark} For $\boldsymbol\alpha=\boldsymbol\beta,$ the  theorem \ref{P7Theo4.9} gives the Entropy UP for the STQFrFT of the function $f\in L^2_{\mathbb{H}}(\mathbb{R}^4).$ Moreover, if $\boldsymbol\alpha=\boldsymbol\beta=(\frac{\pi}{2},\frac{\pi}{2}),$ theorem \ref{P7Theo4.9} gives the entropy UP for the STQFT of the function $f\in L^2_{\mathbb{H}}(\mathbb{R}^4).$
\end{remark}

\textbf{Example of STQCFrFT:}\\

Consider a function $f(\bldt)=e^{-\left(|\bldt_1|^2+|\bldt_2|^2\right)},~\bldt=(\bldt_1,\bldt_2)\in\mathbb{R}^2\times\mathbb{R}^2,$ with $\bldt_1=(\tee_1,\tee_2)$ and $\bldt_2=(\tee_3,\tee_4).$ Also consider a function 
$g(\bldt)=
\begin{cases}
1,&0\leq \tee_1<\frac{1}{2}, 0\leq \tee_2<\frac{1}{2}, 0\leq \tee_3<\frac{1}{2}, 0\leq \tee_4<\frac{1}{2}\\
-1,&\frac{1}{2}\leq \tee_1<1, \frac{1}{2}\leq \tee_2<1, \frac{1}{2}\leq \tee_3<1, \frac{1}{2}\leq \tee_4<1\\
0, &otherwise.
\end{cases}
$

Using the definition \ref{P7Defn4.1}, the STQCFrFT of $f$ with respect to the window function $g$ is given by 
\begin{align}\label{P7Example_Definition}
\left(\mathcal{S}^{\boldsymbol\alpha,\boldsymbol\beta}_{\mathbb{H},g}f\right)(\bldx,\bldxi)=\int_{\mathbb{R}^4}\mathcal{K}^i_{\UVWX,\XWVU}(\bldt_1,\bldxi_1)f(\bldt)\overline{g(\bldt-\bldx)}\mathcal{K}^j_{\PQRS,\SRQP}(\bldt_2,\bldxi_2)d\bldt,~(\bldx,\bldxi)\in\mathbb{R}^4\times\mathbb{R}^4,
\end{align}
where $\bldx=(\bldx_1,\bldx_2),~\bldxi=(\bldxi_1,\bldxi_2)\in\mathbb{R}^2\times\mathbb{R}^2$  with $\bldx_1=(\xx_1,\xx_2),~\bldx_2=(\xx_3,\xx_4)$ and $\bldxi_1=(\xxi_1,\xxi_2),~\bldxi_2=(\xxi_3,\xxi_4).$ Thus for the choose function $f$ and the window function $g$ we get from \eqref{P7Example_Definition}
\begin{align}\label{P7Example_Integral1}
\left(\mathcal{S}^{\boldsymbol\alpha,\boldsymbol\beta}_{\mathbb{H},g}f\right)&(\bldx,\bldxi)\notag\\
&=\left\{\int_{\xx_1}^{\xx_1+\frac{1}{2}}\int_{\xx_2}^{\xx_2+\frac{1}{2}}\tilde{d}(\PONM) e^{-i\left\{\tilde{a}(\PONM)(|\bldt_1|^2+|\bldxi_1|^2)-\bldt_1\cdot M_1\bldxi_1\right\}}e^{-|\bldt_1|^2}d\bldt_1\right\}\notag\\
&\hspace{5cm}\times\left\{\int_{\xx_3}^{\xx_3+\frac{1}{2}}\int_{\xx_4}^{\xx_4+\frac{1}{2}} \tilde{d}(\MNOP)e^{-j\left\{\tilde{a}(\MNOP)(|\bldt_2|^2+|\bldxi_2|^2)-\bldt_2\cdot M_2\bldxi_2\right\}}e^{-|\bldt_2|^2}d\bldt_2\right\}\notag\\
&-\left\{\int_{\xx_1+\frac{1}{2}}^{\xx_1+1}\int_{\xx_2+\frac{1}{2}}^{\xx_2+1} \tilde{d}(\PONM)e^{-i\left\{\tilde{a}(\PONM)(|\bldt_1|^2+|\bldxi_1|^2)-\bldt_1\cdot M_1\bldxi_1\right\}}e^{-|\bldt_1|^2}d\bldt_1\right\}\notag\\
&\hspace{5cm}\times\left\{\int_{\xx_3+\frac{1}{2}}^{\xx_3+1}\int_{\xx_4+\frac{1}{2}}^{\xx_4+1}\tilde{d}(\MNOP) e^{-j\left\{\tilde{a}(\MNOP)(|\bldt_2|^2+|\bldxi_2|^2)-\bldt_2\cdot M_2\bldxi_2\right\}}e^{-|\bldt_2|^2}d\bldt_2\right\}.
\end{align}
We first consider the integral 
\begin{align}\label{P7Example_Integral2}
&\int_{\xx_1}^{\xx_1+\frac{1}{2}}\int_{\xx_2}^{\xx_2+\frac{1}{2}}\tilde{d}(\PONM)e^{-i\left\{\tilde{a}(\PONM)(|\bldt_1|^2+|\bldxi_1|^2)-\bldt_1\cdot M_1\bldxi_1\right\}}e^{-|\bldt_1|^2}d\bldt_1\notag\\
&=\tilde{d}(\PONM)\left\{\int_{\xx_1}^{\xx_1+\frac{1}{2}}e^{-i\left\{\tilde{a}(\PONM)\left(\tee_1^2+\xxi_1^2\right)-\tee_1\left(\tilde{b}(\PONM,\DCBA)\xxi_1+\tilde{c}(\PONM,\DCBA)\xxi_2\right)\right\}}e^{-\tee_1^2}d\tee_1\right\}\notag\\
&\hspace{7cm}\times\left\{\int_{\xx_2}^{\xx_2+\frac{1}{2}}e^{-i\left\{\tilde{a}(\PONM)\left(\tee_2^2+\xxi_2^2\right)-\tee_2\left(-\tilde{c}(\PONM,\DCBA)\xxi_1+\tilde{b}(\PONM,\DCBA)\xxi_2\right)\right\}}e^{-\tee_2^2}d\tee_2\right\}.
\end{align}
Consider the integral
\begin{align}\label{P7Example_Integral3}
&\int_{\xx_1}^{\xx_1+\frac{1}{2}}e^{-i\left\{\tilde{a}(\PONM)\left(\tee_1^2+\xxi_1^2\right)-\tee_1\left(\tilde{b}(\PONM,\DCBA)\xxi_1+\tilde{c}(\PONM,\DCBA)\xxi_2\right)\right\}}e^{-\tee_1^2}d\tee_1\notag\\
&=e^{-i\tilde{a}(\PONM)\xxi_1^2}\int_{\xx_1}^{\xx_1+\frac{1}{2}}e^{-\left\{\left(1+i\tilde{a}(\PONM)\right)\tee_1^2-i\tee_1\left(\tilde{b}(\PONM,\DCBA)\xxi_1+\tilde{c}(\PONM,\DCBA)\xxi_2\right)\right\}}d\tee_1\notag\\
&=\frac{\sqrt{\pi}e^{-i\tilde{a}(\PONM)\xxi_1^2-\frac{\left(\tilde{b}(\PONM,\DCBA)\xxi_1+\tilde{c}(\PONM,\DCBA)\xxi_2\right)^2}{4\left(1+i\tilde{a}(\PONM)\right)}}}{2\sqrt{1+i\tilde{a}(\PONM)}}\Bigg[erf\left(A_1(\UVWX,\XWVU,i)\left(\xx_1+\frac{1}{2}\right)-B_1(\UVWX,\XWVU,i,\bldxi_1)\right)\notag\\
&\hspace{10cm}-erf\left(A_1(\UVWX,\XWVU,i)\xx_1-B_1(\UVWX,\XWVU,i,\bldxi_1)\right)\Bigg],
\end{align}
where $erf(\xx)=\frac{2}{\sqrt{\pi}}\int_0^{\xx}e^{-t^2}dt,$ $A(\UVWX,\XWVU,i)=\sqrt{1+i\tilde{a}(\PONM)}$ and $B_1(\UVWX,\XWVU,i)=\frac{\left(\tilde{b}(\PONM,\DCBA)\xxi_1+\tilde{c}(\PONM,\DCBA)\xxi_2\right)^2}{4\left(1+i\tilde{a}(\PONM)\right)}.$ Similarly, we have
\begin{align}\label{P7Example_Integral4}
&\int_{\xx_2}^{\xx_2+\frac{1}{2}}e^{-i\left\{\tilde{a}(\PONM)\left(\tee_2^2+\xxi_2^2\right)-\tee_2\left(-\tilde{c}(\PONM,\DCBA)\xxi_1+\tilde{b}(\PONM,\DCBA)\xxi_2\right)\right\}}e^{-\tee_2^2}d\tee_2\notag\\
&=\frac{\sqrt{\pi}e^{-i\tilde{a}(\PONM)\xxi_2^2-\frac{\left(-\tilde{c}(\PONM,\DCBA)\xxi_1+\tilde{b}(\PONM,\DCBA)\xxi_2\right)^2}{4\left(1+i\tilde{a}(\PONM)\right)}}}{2\sqrt{1+i\tilde{a}(\PONM)}}\Bigg[erf\left(A(\UVWX,\XWVU,i)\left(\xx_2+\frac{1}{2}\right)-B_2(\UVWX,\XWVU,i,\bldxi_1)\right)\notag\\
&\hspace{10cm}-erf\left(A(\UVWX,\XWVU,i)\xx_2-B_2(\UVWX,\XWVU,i,\bldxi_1)\right)\Bigg],
\end{align}
where $B_2(\UVWX,\XWVU,i)=\frac{\left(-\tilde{c}(\PONM,\DCBA)\xxi_1+\tilde{b}(\PONM,\DCBA)\xxi_2\right)^2}{4\left(1+i\tilde{a}(\PONM)\right)}.$ Using equations \eqref{P7Example_Integral3} and \eqref{P7Example_Integral4} in \eqref{P7Example_Integral2}, we get
\begin{align}\label{P7Example_Integral5}
\int_{\xx_1}^{\xx_1+\frac{1}{2}}&\int_{\xx_2}^{\xx_2+\frac{1}{2}}\tilde{d}(\PONM)e^{-i\left\{\tilde{a}(\PONM)(|\bldt_1|^2+|\bldxi_1|^2)-\bldt_1\cdot M_1\bldxi_1\right\}}e^{-|\bldt_1|^2}d\bldt_1\notag\\
&=J(\UVWX,\XWVU,i,\bldxi_1)\left[erf\left(A(\UVWX,\XWVU,i)\left(\xx_1+\frac{1}{2}\right)-B_1(\UVWX,\XWVU,i,\bldxi_1)\right)-erf\left(A(\UVWX,\XWVU,i)\xx_1-B_1(\UVWX,\XWVU,i,\bldxi_1)\right)\right]\notag\\
&\times \left[erf\left(A(\UVWX,\XWVU,i)\left(\xx_2+\frac{1}{2}\right)-B_2(\UVWX,\XWVU,i,\bldxi_1)\right)-erf\left(A(\UVWX,\XWVU,i)\xx_2-B_2(\UVWX,\XWVU,i,\bldxi_1)\right)\right],
\end{align}
where $J(\UVWX,\XWVU,i,\bldxi_1)=\tilde{d}(\PONM)\frac{\pi}{4(1+i\tilde{a}(\PONM))}e^{-\left\{i\tilde{a}(\PONM)|\bldxi_1|^2+\frac{\left((\tilde{b}(\PONM,\DCBA))^2+(\tilde{c}(\PONM,\DCBA))^2\right)|\bldxi_1|^2}{4(1+i\tilde{a}(\PONM))}\right\}}.$ Similarly, we have
\begin{align}\label{P7Example_Integral6}
\int_{\xx_3}^{\xx_3+\frac{1}{2}}&\int_{\xx_4}^{\xx_4+\frac{1}{2}}\tilde{d}(\MNOP)e^{-j\left\{\tilde{a}(\MNOP)(|\bldt_2|^2+|\bldxi_2|^2)-\bldt_2\cdot M_2\bldxi_2\right\}}e^{-|\bldt_2|^2}d\bldt_1\notag\\
&=J(\PQRS,\SRQP,j,\bldxi_2)\left[erf\left(A(\PQRS,\SRQP,j)\left(\xx_3+\frac{1}{2}\right)-B_1(\PQRS,\SRQP,j,\bldxi_2)\right)-erf\left(A(\PQRS,\SRQP,j)\xx_3-B_1(\PQRS,\SRQP,j,\bldxi_2)\right)\right]\notag\\
&\times \left[erf\left(A(\PQRS,\SRQP,j)\left(\xx_4+\frac{1}{2}\right)-B_2(\PQRS,\SRQP,j,\bldxi_2)\right)-erf\left(A(\PQRS,\SRQP,j)\xx_4-B_2(\PQRS,\SRQP,j,\bldxi_2)\right)\right].
\end{align}
Thus from equations \eqref{P7Example_Integral5} and \eqref{P7Example_Integral6}, we have
\begin{align}\label{P7Example_Integral7}
&\left\{\int_{\xx_1}^{\xx_1+\frac{1}{2}}\int_{\xx_2}^{\xx_2+\frac{1}{2}}\tilde{d}(\PONM) e^{-i\left\{\tilde{a}(\PONM)(|\bldt_1|^2+|\bldxi_1|^2)-\bldt_1\cdot M_1\bldxi_1\right\}}e^{-|\bldt_1|^2}d\bldt_1\right\} \notag\\
&\hspace{6cm}\times\left\{\int_{\xx_3}^{\xx_3+\frac{1}{2}}\int_{\xx_4}^{\xx_4+\frac{1}{2}} \tilde{d}(\MNOP)e^{-j\left\{\tilde{a}(\MNOP)(|\bldt_2|^2+|\bldxi_2|^2)-\bldt_2\cdot M_2\bldxi_2\right\}}e^{-|\bldt_2|^2}d\bldt_2\right\}\notag\\
&=J(\UVWX,\XWVU,i,\bldxi_1)\left[erf\left(A(\UVWX,\XWVU,i)\left(\xx_1+\frac{1}{2}\right)-B_1(\UVWX,\XWVU,i,\bldxi_1)\right)-erf\left(A(\UVWX,\XWVU,i)\xx_1-B_1(\UVWX,\XWVU,i,\bldxi_1)\right)\right]\notag\\
&\times \left[erf\left(A(\UVWX,\XWVU,i)\left(\xx_2+\frac{1}{2}\right)-B_2(\UVWX,\XWVU,i,\bldxi_1)\right)-erf\left(A(\UVWX,\XWVU,i)\xx_2-B_2(\UVWX,\XWVU,i,\bldxi_1)\right)\right] J(\PQRS,\SRQP,j,\bldxi_2)\notag\\
&\times\left[erf\left(A(\PQRS,\SRQP,j)\left(\xx_3+\frac{1}{2}\right)-B_1(\PQRS,\SRQP,j,\bldxi_2)\right)-erf\left(A(\PQRS,\SRQP,j)\xx_3-B_1(\PQRS,\SRQP,j,\bldxi_2)\right)\right]\notag\\
&\times \left[erf\left(A(\PQRS,\SRQP,j)\left(\xx_4+\frac{1}{2}\right)-B_2(\PQRS,\SRQP,j,\bldxi_2)\right)-erf\left(A(\PQRS,\SRQP,j)\xx_4-B_2(\PQRS,\SRQP,j,\bldxi_2)\right)\right].
\end{align}
Similarly, it can be shown that 
\begin{align}\label{P7Example_Integral8}
&\left\{\int_{\xx_1+\frac{1}{2}}^{\xx_1+1}\int_{\xx_2+\frac{1}{2}}^{\xx_2+1} \tilde{d}(\PONM)e^{-i\left\{\tilde{a}(\PONM)(|\bldt_1|^2+|\bldxi_1|^2)-\bldt_1\cdot M_1\bldxi_1\right\}}e^{-|\bldt_1|^2}d\bldt_1\right\} \notag\\
&\hspace{6cm}\times\left\{\int_{\xx_3+\frac{1}{2}}^{\xx_3+1}\int_{\xx_4+\frac{1}{2}}^{\xx_4+1}\tilde{d}(\MNOP) e^{-j\left\{\tilde{a}(\MNOP)(|\bldt_2|^2+|\bldxi_2|^2)-\bldt_2\cdot M_2\bldxi_2\right\}}e^{-|\bldt_2|^2}d\bldt_2\right\}\notag\\
&=J(\UVWX,\XWVU,i,\bldxi_1)\left[erf\left(A(\UVWX,\XWVU,i)\left(\xx_1+1\right)-B_1(\UVWX,\XWVU,i,\bldxi_1)\right)-erf\left(A(\UVWX,\XWVU,i)\left(\xx_1+\frac{1}{2}\right)-B_1(\UVWX,\XWVU,i,\bldxi_1)\right)\right]\notag\\
&\times \left[erf\left(A(\UVWX,\XWVU,i)\left(\xx_2+1\right)-B_2(\UVWX,\XWVU,i,\bldxi_1)\right)-erf\left(A(\UVWX,\XWVU,i)\left(\xx_2+\frac{1}{2}\right)-B_2(\UVWX,\XWVU,i,\bldxi_1)\right)\right] J(\PQRS,\SRQP,j,\bldxi_2)\notag\\
&\times\left[erf\left(A(\PQRS,\SRQP,j)\left(\xx_3+1\right)-B_1(\PQRS,\SRQP,j,\bldxi_2)\right)-erf\left(A(\PQRS,\SRQP,j)\left(\xx_3+\frac{1}{2}\right)-B_1(\PQRS,\SRQP,j,\bldxi_2)\right)\right]\notag\\
&\times \left[erf\left(A(\PQRS,\SRQP,j)\left(\xx_4+1\right)-B_2(\PQRS,\SRQP,j,\bldxi_2)\right)-erf\left(A(\PQRS,\SRQP,j)\left(\xx_4+\frac{1}{2}\right)-B_2(\PQRS,\SRQP,j,\bldxi_2)\right)\right].
\end{align}
Thus from equations \eqref{P7Example_Definition}, \eqref{P7Example_Integral7} and \eqref{P7Example_Integral8} we obtain
\begin{align*}
&\left(\mathcal{S}^{\boldsymbol\alpha,\boldsymbol\beta}_{\mathbb{H},g}f\right)(\bldx,\bldxi)\\
&=J(\UVWX,\XWVU,i,\bldxi_1)\left[erf\left(A(\UVWX,\XWVU,i)\left(\xx_1+\frac{1}{2}\right)-B_1(\UVWX,\XWVU,i,\bldxi_1)\right)-erf\left(A(\UVWX,\XWVU,i)\xx_1-B_1(\UVWX,\XWVU,i,\bldxi_1)\right)\right]\notag\\
&\times \left[erf\left(A(\UVWX,\XWVU,i)\left(\xx_2+\frac{1}{2}\right)-B_2(\UVWX,\XWVU,i,\bldxi_1)\right)-erf\left(A(\UVWX,\XWVU,i)\xx_2-B_2(\UVWX,\XWVU,i,\bldxi_1)\right)\right] J(\PQRS,\SRQP,j,\bldxi_2)\notag\\
&\times\left[erf\left(A(\PQRS,\SRQP,j)\left(\xx_3+\frac{1}{2}\right)-B_1(\PQRS,\SRQP,j,\bldxi_2)\right)-erf\left(A(\PQRS,\SRQP,j)\xx_3-B_1(\PQRS,\SRQP,j,\bldxi_2)\right)\right]\notag\\
&\times \left[erf\left(A(\PQRS,\SRQP,j)\left(\xx_4+\frac{1}{2}\right)-B_2(\PQRS,\SRQP,j,\bldxi_2)\right)-erf\left(A(\PQRS,\SRQP,j)\xx_4-B_2(\PQRS,\SRQP,j,\bldxi_2)\right)\right]\notag\\
&-J(\UVWX,\XWVU,i,\bldxi_1)\left[erf\left(A(\UVWX,\XWVU,i)\left(\xx_1+1\right)-B_1(\UVWX,\XWVU,i,\bldxi_1)\right)-erf\left(A(\UVWX,\XWVU,i)\left(\xx_1+\frac{1}{2}\right)-B_1(\UVWX,\XWVU,i,\bldxi_1)\right)\right]\notag\\
&\times \left[erf\left(A(\UVWX,\XWVU,i)\left(\xx_2+1\right)-B_2(\UVWX,\XWVU,i,\bldxi_1)\right)-erf\left(A(\UVWX,\XWVU,i)\left(\xx_2+\frac{1}{2}\right)-B_2(\UVWX,\XWVU,i,\bldxi_1)\right)\right] J(\PQRS,\SRQP,j,\bldxi_2)\notag\\
&\times\left[erf\left(A(\PQRS,\SRQP,j)\left(\xx_3+1\right)-B_1(\PQRS,\SRQP,j,\bldxi_2)\right)-erf\left(A(\PQRS,\SRQP,j)\left(\xx_3+\frac{1}{2}\right)-B_1(\PQRS,\SRQP,j,\bldxi_2)\right)\right]\notag\\
&\times \left[erf\left(A(\PQRS,\SRQP,j)\left(\xx_4+1\right)-B_2(\PQRS,\SRQP,j,\bldxi_2)\right)-erf\left(A(\PQRS,\SRQP,j)\left(\xx_4+\frac{1}{2}\right)-B_2(\PQRS,\SRQP,j,\bldxi_2)\right)\right].
\end{align*}
 
\section{Conclusions}
In this paper, we have defined the two sided QCFrFT and obtained the Parseval's formula and the sharp Hausdorff-Young inequality based on which we obtained its R\`enyi entropy UP. Including the basic properties like boundedness, translation, etc of the newly proposed STQCFrFT, we also obtain the inner product relation followed by the reconstruction formula. Not only that with the aid of sharp Hausdorff-Young inequality for the QCFrFT we also obtained the Lieb's and entropy UPs for the proposed STQCFrFT.

\section{Acknowledgement}
This work is partially supported by UGC File No. 16-9(June 2017)/2018(NET/CSIR), New Delhi, India.
\bibliography{P7MasterB7_CouQFrFT}
\bibliographystyle{plain}
\end{document}